 \newtheorem{thm}{Theorem}[section]
 \newtheorem{lemma}[thm]{Lemma}
 \newtheorem{prop}[thm]{Proposition}
 \newtheorem{hy}[thm]{Hypotheses}
 \newtheorem{defin}[thm]{Definition}
 \newtheorem{rem}[thm]{Remark}
\newtheorem{hy1}[thm]{Hypothesis}
\newtheorem{ex}[thm]{Examples}
\newtheorem{ex1}[thm]{Example}
 \DeclareMathOperator{\IM}{Im}
 \DeclareMathOperator{\Ker}{Ker}
\newcommand{\s}{\hspace{4ex}}
\newcommand{\ep}{\varepsilon}
\newcommand{\acc}{\"}
\newcommand{\finedim}{{\unskip\nobreak\hfil\penalty50
   \hskip2em\hbox{}\nobreak\hfil\mbox{\rule{1ex}{1ex} \qquad}
   \parfillskip=0pt \finalhyphendemerits=0\par\medskip}}
\title[Hierarchies of Evolution Equations]{The Sturm-Liouville Hierarchy of evolution equations and the Weyl functions of  related spectral problems}
\author{Paola Rubbioni, Anna Rita Sambucini, Luca Zampogni}
\address{Dipartimento di Matematica e Informatica, Università di Perugia}
\email{paola.rubbioni@unipg.it, anna.sambucini@unipg.it, luca.zampogni[unipg.it}
\begin{document}
%\today
\begin{abstract} 
The goal of the present paper  is that of defining the so-called  Sturm-Liouville hierarchy of evolution equations, firstly by using the zero-curvature formalism, then by using the asymptotic properties of the Weyl $m$-functions for certain classes of pairs $(q,y)$ of the Sturm-Liouville eigenvalue equation $$-\varphi''+q\varphi=\lambda y\varphi,$$ in the space $L^2(\mathbb R,ydx)$. Since the Weyl $m$-functions are known to contain all the information concerning the spectral properties of the above equation, the determination of the evolution of some spectral characteristics will allow to determine solutions of the hierarchies of evolution equations. 
\end{abstract}

\maketitle

\section{Introduction}  
The Korteweg-de Vries (K-dV) equation  $$q_t=\dfrac{3}{2}qq_x-\dfrac{q_{xxx}}{4},$$ as well as the Camassa-Holm equation 
$$4u_t-u_{xxt}=12uu_x-2u_xu_{xx}-uu_{xxx}+4\omega_0 u_x$$ are just two examples of evolution equations which can be studied by using instruments arising from the spectral theory of  Sturm-Liouville operators. In particular, solutions of the above mentioned equations can be determined when the initial data are related to the so-called \it potentials \rm of a Sturm-Liouville operator, whose spectrum has some prescribed properties. Well known examples in the case of the K-dV equation are given by the decaying potentials \cite{GGKM,Le1,SW}, the reflectionless potentials \cite{Cr,GWZ,JZ3,JZ4,Lu} and their generalizations \cite{Lu,Ko,Ma1}, the algebro-geometric potentials \cite{DMN,Mi,Z2} and their limits \cite{GN, Le,Lu,Mar}, the Sato-Segal-Wilson potentials \cite{SW}, and, of course, other types of potentials of the Schr\"odinger operator $$\mathcal S:=-D^2+q,$$ acting in $L^2(\mathbb R)$. 

More recently, a considerable effort has been made in order to extend the above-mentioned types of potentials to the more general case of the Sturm-Liouville operator $$\mathcal L=\dfrac{1}{y}(-D^2+q),$$ acting on $L^2(\mathbb R,ydx)$, with the purpose of both extending the known theory to a more general situation, and to determine solutions of evolution equations which generalize both the K-dV and the Camassa-Holm equation. In particular, when $q\equiv 1$ in $\mathcal L$,  it is possible, in some cases, to determine solutions of the Camassa-Holm equation by using the spectral theory of the operator, in a manner analogous to how the K-dV equation is treated via the Schrödinger operator
This has been done in the case of algebro-geometric potentials \cite{AF,CH,GH,Z1}, and, partially, in the case of decaying potentials \cite{Con1,Con2,Con3,Z3} (see also \cite{Bon,Liv} for other generalizations).

In recent years, a general hierarchy of evolution equations, referred to as the \bf Sturm-Liouville hierarchy\rm, has been introduced which encompasses both the K-dV and Camassa–Holm hierarchies, and introduces interesting generalizations. Solutions to this hierarchy have been developed in relation to algebro-geometric potentials and certain of their compact-uniform limits associated with the operator $\mathcal L$ \cite{JZ4bis,JZ5,JZ6,JZ7}. Furthermore, solutions have also been obtained in the framework of scattering theory \cite{Z3}.

This paper has two main goals. The first is to provide  a definition of the Sturm-Liouville hierarchy of evolution equations by using the Weyl $m$-functions of the underlying Sturm-Liouville operator $\mathcal L$. Since the Weyl $m$-functions encode essentially all the spectral information of the associated operator,  studying their evolution under the flows generated by the hierarchy yields profound insights into the \it  isospectral nature \rm of the dynamics. Consequently, important information about the corresponding solutions can be extracted from the behavior of the Weyl $m$-functions.

The second goal—closely related to the first—is to extend the classes of potentials that generate solutions to the equations in the Sturm–Liouville hierarchy, using asymptotic expansions. In particular, we will identify admissible expansions of the Weyl $m$-functions, which will in turn yield new families of potentials. These potentials can be used as initial data for solving the hierarchy of evolution equations, thereby generating new isospectral flows.

This paper is the first in a series of three, in which we aim to conduct as complete a study as possible of both the admissible potentials and the corresponding solutions to the hierarchy of evolution equations. As mentioned, the present work focuses on the formulation of the hierarchies via the Weyl $m$-functions, and on the identification of the classes of potentials that lead to well-defined solutions.

The paper is organized as follows: in Section 2 we give all the necessary background material concerning the spectral problems we will study. In Section 3 we will define and discuss some properties of the Sturm-Liouville hierarchy of evolution equations by using the \it zero-curvature \rm method. In Section 4,  we will define the equations we are willing to solve, we provide some cases and examples and we obtain some preliminary results, which are important both from the point of view of the hierarchy, and of the associated spectral problems. Section 5 contains the discussion of the motion of the Weyl $m$-functions under the flow induced by the hierarchy of evolution equations, allowing to relate the definition given by means to the zero-curvature method to that obtained by considering  the time evolution of the Weyl $m$-functions. In Section 6, we will determine the hierarchy using a certain expansion of the Weyl $m$-functions, thus producing corresponding families of potentials of a Sturm-liouville operator which will give rise to solutions of the hierarchy. Finally, in Section 7, we will give concrete examples of potentials which satisfy the expansion of the Weyl $m$-functions.

\section{Preliminaries}

In this section we review some general properties concerning the Sturm-Liouville operator on the line. The basic material we will state here can be essentially found in \cite{FJZ,JZ1}. Let us start by introducing the spectral problem we will deal with. Let $$\mathcal E_2=\{a=(q,y):\mathbb R\rightarrow\mathbb R^2\;|\; a \;\mbox{is uniformly continuous and bounded and}$$
$$\; 0<\delta<\inf_{x\in\mathbb R}y(x)\},$$ equipped with the standard topology of uniform convergence on compact sets. For $a\in\mathcal E_2$, let us define the operator $\mathcal L_a:\mathcal D\rightarrow L^2(\mathbb R,ydx)$ as $$\mathcal L_a(\varphi)=\dfrac{1}{y}\left[-\varphi''+q\varphi\right].$$  The domain $\mathcal D$ is given by $$\mathcal D=\{\varphi:\mathbb R\rightarrow\mathbb R\;|\;\varphi'\;\mbox{exists and is absolutely continuous,}\;\varphi''\in  L^2(\mathbb R,ydx)\}.$$ In the standard terminology, $q$ is a \it potential\rm, while $y$ is a \it density. \rm It is well-known that $\mathcal L_a$ admits a unique self-adjoint extension to all $L^2(\mathbb R,ydx)$. We will continue to denote such extension by $\mathcal L_a$. The most known examples of operators of this kind are the Schr\"odinger operator $$\mathcal S:=-D^2+q,\;\;\;a=(q,1)$$ and the acoustic operator $$\mathcal S_1:=\dfrac{1}{y}[-D^2+1],\;\;\;a=(1,y),$$ where $D$ denotes the differentiation operator. Let us denote by $\Sigma_a$ the spectrum of the operator $\mathcal L_a$. Then $\Sigma_a\subset \mathbb R$ is bounded below, unbounded above and invariant under translation of $a\in\mathcal E_2$. The set $R_a:=\mathbb R\setminus\Sigma_a$ is at most a countable union of disjoint (possibly unbounded) open intervals. 

For general elements $a\in\mathcal E_2$ it can be very hard to obtain more information concerning the structure of the set $\Sigma_a$, and in fact one often has to restrict to proper subsets of $\mathcal E_2$, for instance the \it periodic \rm or \it almost periodic \rm elements. The operator $\mathcal S$ has been extensively studied since many years, mainly due to the motivations explained in the introduction, while  $\mathcal S_1$ has been focused in the last three decades, the essential motivation being the relation with the Camassa-Holm equation, as we noted before. However, since many interesting phenomena can occur when one considers a general $a\in\mathcal E_2$, it is absolutely worthwhile to apply for the analysis of the general $\mathcal L_a$ (or even a more general case, as in \cite{FJZ,JZ1,JZ2}. Here, however, we will deal only with a sub-case, to avoid notational and computational difficulties). 

Now, we will introduce a Bebutov-type construction which has already revealed to be very useful when studying the spectral properties of $\mathcal L_a$ (see \cite{FJZ,JZ1,JZ2}). 
 Let $\tau_x(a)(\cdot)=a(x+\cdot)$ be the translation flow on $\mathcal E_2$, and choose an element $a_0\in \mathcal E_2$.  Consider the set $\mathcal A=cls\;\;Hull(a_0)=cls\;\{\tau_x(a_0)\;|\;x\in\mathbb R\}.$ Then $\mathcal A$ is a compact, translation invariant subset of $\mathcal E_2$. For $a\in\mathcal E_2$, we have the eigenvalue equation $$E_a(\varphi,\lambda):=-D^2\varphi+q\varphi=\lambda y\varphi,$$ which can be expressed in matrix form as $$\begin{pmatrix}\varphi\\ \varphi'\end{pmatrix}'=\begin{pmatrix}0 & 1 \\q-\lambda y&0\end{pmatrix}\begin{pmatrix}\varphi\\ \varphi'\end{pmatrix}.$$ In the remaining of this section, we will abuse notation and write $$a(x,\lambda)=\begin{pmatrix}0 & 1 \\q(x)-\lambda y(x)&0\end{pmatrix}.$$ 

Now, define a function $A:\mathcal E_2\rightarrow \mathbb M(2,\mathbb R)$ by $A(a)= a(0,\lambda)$. Applying the translation flow we have $a(x,\lambda)=A(\tau_x(a)).$ If we fix an element $a_0\in\mathcal E_2$ and construct the set $\mathcal A=cls\;Hull(a_0)$
as above, it makes sense to speak of the family of differential systems \begin{equation}\label{fam} \begin{pmatrix}\varphi\\ \varphi'\end{pmatrix}'=A(\tau_x(a))\begin{pmatrix}\varphi\\ \varphi'\end{pmatrix},\;\;\;a\in\mathcal A.\end{equation} Let $\Phi_a(x)$ be the fundamental matrix solution of \eqref{fam}, so that $\Phi_a(0)=Id$ for every $a\in\mathcal A$. We now introduce the fundamental concept of exponential dichotomy.

\begin{defin} The family \eqref{fam} is said to have an \underline{exponential dichotomy} over $\mathcal A$ if  there are
positive constants $\eta,\delta$ together with a continuous,
projection-valued function $P:\mathcal A\rightarrow\mathbb M_2$ (thus
$P(a)^2=P(a)$ for all $a\in\mathcal A$) such that the
following estimates hold:
\begin{itemize}
\item[(i)] $|\Phi_{a}(t)P(a)\Phi_{a}(s)^{-1}|\leq\eta
e^{-\delta(t-s)},\s t\geq s,$
\item[(ii)] $|\Phi_{a}(t)(I-P(a))\Phi_{a}(s)^{-1}|\leq\eta
e^{\delta(t-s)},\s t\leq s.$
\end{itemize}\finedim
\end{defin}

Perhaps the most important connection between the exponential dichotomy property and the spectrum of $\mathcal L_a$ is the following Theorem (see \cite{JM}, also \cite{FJZ}):
\begin{thm}Let $\mathcal A$ be defined as above and let us consider the family \eqref{fam}. Let $a\in \mathcal A$ have dense orbit. Then the spectrum $\Sigma_a$ of the operator $\mathcal L_a$ equals the set $$\Sigma_{ed}:=\{\lambda\in\mathbb C\;|\;\mbox{the family \eqref{fam} does \underline{not} admit an exponential dichotomy over} \mathcal A\}.$$
\end{thm}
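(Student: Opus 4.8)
The plan is to establish the equality $\Sigma_a = \Sigma_{ed}$ by showing the two inclusions separately, using the standard dictionary between exponential dichotomies, resolvents, and the $L^2$-spectrum. First I would recall that, since $a$ has dense orbit in $\mathcal A$, the set $\Sigma_{ed}$ is independent of the particular choice of $a$ with dense orbit (the exponential dichotomy is a property of the whole family over $\mathcal A$), and it is closed; moreover $\mathbb{C}\setminus\Sigma_{ed}$ is open and contains all $\lambda$ with large imaginary part, since for such $\lambda$ the coefficient matrix $a(x,\lambda)$ has eigenvalues with uniformly separated real parts (here one uses $\inf y>\delta>0$ and boundedness of $q,y$), which forces a dichotomy via a roughness/perturbation argument. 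This gives that $\Sigma_{ed}$ is a closed subset of $\mathbb{R}$ bounded below.

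For the inclusion $\mathbb{C}\setminus\Sigma_{ed}\subseteq\mathbb{C}\setminus\Sigma_a$, I would fix $\lambda\notin\Sigma_{ed}$ and use the dichotomy projection $P$ together with the Green's function built from the stable and unstable solution spaces: define $\varphi^{\pm}_\lambda$ spanning the ranges of $P(a)$ and $I-P(a)$ respectively, which decay at $+\infty$ and $-\infty$ by estimates (i) and (ii). The resolvent $(\mathcal L_a-\lambda)^{-1}$ is then the integral operator on $L^2(\mathbb R, y\,dx)$ with kernel $G_\lambda(x,s) = \varphi^-_\lambda(x)\varphi^+_\lambda(s)/W$ for $x<s$ and symmetrically for $x>s$, where $W$ is the (constant) Wronskian; the exponential bounds from the dichotomy, combined with $y$ bounded above and below, give that this kernel defines a bounded operator on $L^2(\mathbb R, y\,dx)$ (e.g.\ via a Schur test), and a direct computation shows it inverts $\mathcal L_a-\lambda$. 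Hence $\lambda\notin\Sigma_a$.

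For the reverse inclusion $\mathbb C\setminus\Sigma_a\subseteq\mathbb C\setminus\Sigma_{ed}$, I would fix $\lambda\notin\Sigma_a$, so the resolvent $R_\lambda=(\mathcal L_a-\lambda)^{-1}$ is a bounded operator on $L^2(\mathbb R,y\,dx)$. The goal is to produce the dichotomy projection. The key point is that for a second-order operator the resolvent is an integral operator whose kernel, because $\lambda$ is in the resolvent set, must be built from a solution $\varphi^+$ that is square-integrable near $+\infty$ and a solution $\varphi^-$ square-integrable near $-\infty$, and these must be linearly independent (otherwise there would be an $L^2$ eigenfunction). One then upgrades square-integrability to exponential decay: this is where one invokes the earlier material (the results of \cite{FJZ,JZ1} and the classical argument of \cite{JM}) that for the family \eqref{fam} over a compact hull, a non-trivial exponential estimate at a single point with dense orbit, together with uniform continuity and minimality of the translation flow on the orbit closure, propagates to a uniform dichotomy over all of $\mathcal A$. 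Concretely, one defines $P(\tau_x(a))$ to be the projection along $\mathrm{span}\{(\varphi^+(x),(\varphi^+)'(x))\}$ onto $\mathrm{span}\{(\varphi^-(x),(\varphi^-)'(x))\}$, checks continuity and invariance, and derives estimates (i)–(ii) from the decay of $\varphi^\pm$.

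\textbf{Main obstacle.} The delicate step is the last one: passing from "$\lambda$ is in the resolvent set of $\mathcal L_a$ on $L^2(\mathbb R,y\,dx)$'' to "the family admits an \emph{exponential} (not merely asymptotic) dichotomy uniformly over the compact hull $\mathcal A$.'' Square-integrability of the Weyl solutions at $\pm\infty$ is immediate, but exponential decay and the uniformity over $\mathcal A$ require the Sacker--Sell/Johnson spectral theory for the family of linear systems \eqref{fam}, i.e.\ one must know that the presence of some (possibly non-uniform) splitting, combined with compactness of $\mathcal A$ and density of the orbit of $a$, forces a genuine exponential dichotomy. This is exactly the content of the theorem of Johnson--Moser cited as \cite{JM} (and re-derived in \cite{FJZ}), so in the write-up I would reduce to that statement rather than reprove it, and the bulk of the proof then consists of carefully checking that our weighted space $L^2(\mathbb R, y\,dx)$ — with $y$ bounded and bounded away from zero — does not alter the spectrum compared to the unweighted setting in which \cite{JM} is phrased, and that the change of dependent variable $\varphi\mapsto(\varphi,\varphi')$ intertwines the operator-theoretic and dynamical formulations.
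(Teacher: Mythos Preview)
The paper does not give a proof of this theorem at all: it is stated with a citation (``see \cite{JM}, also \cite{FJZ}'') and used as a black box. Your plan is therefore not being compared against an argument in the paper, but against the literature result it quotes.

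That said, your outline is the standard and correct route to this statement. The direction ``dichotomy $\Rightarrow$ resolvent'' via the Green kernel and a Schur test is unproblematic, and you have correctly located the real content in the converse direction, which indeed requires the Sacker--Sell/Johnson machinery to pass from $L^2$-solutions at a single base point to a uniform exponential dichotomy over the compact hull. Two small corrections: (i) you invoke ``minimality of the translation flow on the orbit closure,'' but the hypothesis is only that $a$ has dense orbit, not that $\mathcal A$ is minimal; the relevant arguments (e.g.\ Johnson's 1986 paper, listed here as \cite{J86}, and the Sacker--Sell theory) do not need minimality, only compactness of $\mathcal A$ together with density of the orbit of $a$, so drop the word ``minimality''; (ii) the reference \cite{JM} is the Johnson--Moser rotation-number paper, whereas the precise statement ``resolvent set $=$ dichotomy set'' for bounded coefficients is more directly in \cite{J86} and \cite{FJZ}, so in a write-up you would do better to cite those for the hard direction rather than \cite{JM}.
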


Another consequence of the exponential dichotomy is that it permits to define the Weyl $m$-functions in a very convenient way. Let us now fix the boundary condition $\varphi(0)=0$. Then  define the self-adjoint operators $\mathcal L^\pm_a:L^2(\mathbb R^\pm,ydx)\rightarrow L^2(\mathbb R^\pm,ydx)$ on the half-lines $\mathbb R^\pm$ respectively.  For $\Im\lambda\neq 0$, the Weyl $m$-functions $m_\pm(a,\lambda)$ are usually defined as those complex numbers such that $$m_\pm(a,\lambda)=\dfrac{D\varphi_a^\pm(0,\lambda)}{\varphi_a^\pm(0,\lambda)},$$ where $\varphi_a^\pm(x,\lambda)$ are the unique (up to a constant multiple) solutions of the equation $E_a(\varphi,\lambda)$ which lie in $L^2(\mathbb R^\pm,ydx)$ respectively. Now, if $\Im\lambda\neq 0$ and $a\in\mathcal A$, then both $\Ker P(a)$ and $\IM P(a)$ are complex lines in $\mathbb C^2$. It can be proved that these lines can be parametrized as follows $$\IM P(a)=Span\begin{pmatrix}1 \\ m_+(a,\lambda)\end{pmatrix}\s \Ker P(a)=Span\begin{pmatrix} 1\\ m_-(a,\lambda)\end{pmatrix}.$$ We adopt the convention that $m_\pm(a,\lambda)=\infty$ if and only if $\IM P(a)=\begin{pmatrix}0\\1\end{pmatrix}$ and $\Ker P(a)=\begin{pmatrix}0\\1\end{pmatrix}$ respectively.
It is well-known that this can occur only when $\Im\lambda=0$, and in fact when $\varphi_a^\pm(x,\lambda)$ is an eigenfunction of $\mathcal L_a^\pm$ and hence $\lambda$ is an eigenvalue of $\mathcal L_a^\pm$ respectively. 
The Weyl $m$-functions admit nontangential limits $$m_\pm(a,\eta):=\lim_{\ep\rightarrow \infty}m_\pm(a,\eta+i\ep)$$ for a.a. $\eta\in\mathbb R$.

 If we use the translation flow, then we can define functions $$m_\pm:\mathbb R\times(\mathbb C\setminus\mathbb R)\rightarrow\mathbb C:(x,\lambda)\mapsto m_\pm(\tau_x(a),\lambda).$$ We will abuse notation and write $m_\pm(x,\lambda)$ instead of $m_\pm(\tau_x(a),\lambda)$ when there is no confusion.
It is easy to prove that, by their very definition, the Weyl $m$-functions  of $m_\pm(x,\lambda)$ satisfy the Riccati equation \begin{equation}\label{ricc2} m'+m^2=q-\lambda y.\end{equation}

 So, if $\Im\lambda\neq 0$, $m_\pm(x,\lambda)$ span $\IM P(\tau_x(a))$ and $\Ker P(\tau_x(a))$ respectively. If, for $\Im\lambda\neq 0$, we define $$\varphi_\pm(x,\lambda)=\exp\left(\int_0^x m_\pm(s,\lambda)ds\right),$$ then $\varphi_\pm(0,\lambda)=1$, $\varphi'_\pm(0,\lambda)=m_\pm(a,\lambda)$ and $\varphi_\pm$ satisfy the eigenvalue equation $E_a(\varphi,\lambda).$ By definition, $\varphi_\pm(\cdot,\lambda)\in L^2(\mathbb R^\pm,ydx)$ respectively. 

\bigskip

Now, we examine in more detail the behavior of the Weyl $m$-functions in proximity of the real line. We content ourselves to state the main results and give a picture of the situation; we  will indeed use the properties of $m_\pm(x,\lambda)$ to give an alternative interpretation/definition of the hierarchies of evolution equations we will study, which can be fruitful in future developments.

The observations above, together with other stuff (see, i.e., \cite{FJZ,JZ1}) allow us to state the following 
\begin{prop} The Weyl $m$-function $m_+(x,\lambda)$ (resp. $m_-(x,\lambda)$) extends holomorphically through every open interval $J\subset \mathbb R\setminus \Sigma_{\tau_x(a)}^+$ (resp. $J\subset \mathbb R\setminus \Sigma_{\tau_x(a)}^-$), where $\Sigma_{\tau_x(a)}^\pm$ denote the spectra of the operators $\mathcal L^\pm_{\tau_x(a)}$ respectively. In particular, $m_+(x,\lambda)$ (resp. $m_-(x,\lambda)$) has a pole  at every isolated eigenvalue of the operator $\mathcal L_{\tau_x(a)}^+$ (resp. $\mathcal L_{\tau_x(a)}^-$). \end{prop}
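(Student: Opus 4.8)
It is enough to treat $m_+$; the assertions for $m_-$ will follow verbatim once one recalls that, owing to the orientation of $\mathbb R^-$, it is $-m_-(x,\cdot)$ rather than $m_-(x,\cdot)$ that has the Herglotz property used below. I would fix $x\in\mathbb R$, set $b:=\tau_x(a)\in\mathcal A$, and regard $m_+(x,\lambda)=m_+(b,\lambda)$ as a function of $\lambda$ alone. The plan is to reduce the statement to the classical Weyl--Titchmarsh description of the self-adjoint half-line operator $\mathcal L^+_b$ on $L^2(\mathbb R^+,ydx)$. First I would record the Herglotz property of $m_+(x,\cdot)$: since the coefficients of $E_b(\varphi,\lambda)$ are real one has $m_+(x,\bar\lambda)=\overline{m_+(x,\lambda)}$ (so $m_+(x,\cdot)$ is already holomorphic on $\mathbb C\setminus\mathbb R$), and self-adjointness of $\mathcal L^+_b$ gives $\Im m_+(x,\lambda)\cdot\Im\lambda>0$ for $\Im\lambda\neq0$; hence $m_+(x,\cdot)$ admits a Herglotz representation
$$m_+(x,\lambda)=\alpha_x+\beta_x\lambda+\int_{\mathbb R}\Bigl(\frac{1}{t-\lambda}-\frac{t}{1+t^2}\Bigr)\,d\rho^+_x(t),\qquad\beta_x\ge0 .$$
The one genuinely non-formal input, which I would import from the background theory (Weyl--Titchmarsh--Kodaira for the weighted problem; see \cite{FJZ,JZ1}), is that the representing measure $\rho^+_x$ \emph{is} the spectral measure of $\mathcal L^+_b$, so that $\operatorname{supp}\rho^+_x=\Sigma^+_b$.

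For the holomorphic extension I would then argue as follows. Given an open interval $J\subset\mathbb R\setminus\Sigma^+_b$ and a compact $K\subset J$, since $\Sigma^+_b$ is closed and disjoint from $K$ there is a complex neighbourhood $U$ of $K$ with $\overline U\cap\Sigma^+_b=\emptyset$; for $\lambda\in U$ the integrand $t\mapsto\frac{1}{t-\lambda}-\frac{t}{1+t^2}$ is holomorphic in $\lambda$ and bounded uniformly over $t\in\operatorname{supp}\rho^+_x$, so the integral, hence $m_+(x,\cdot)$, is holomorphic on $U$. Letting $K$ exhaust $J$ gives a holomorphic extension of $m_+(x,\cdot)$ to a neighbourhood of $J$; since it agrees with $m_+(x,\cdot)$ on $U\cap(\mathbb C\setminus\mathbb R)$, gluing with the functions already defined on $\mathbb C^+$ and $\mathbb C^-$ produces the extension ``through $J$''. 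The case of $m_-$ is identical, working with $-m_-$.

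For the poles, let $\lambda_0$ be an isolated point of $\Sigma^+_b$. As an isolated point of the spectrum of the self-adjoint operator $\mathcal L^+_b$ it lies outside the essential spectrum, hence is an eigenvalue (of multiplicity one, the equation being a second-order ODE on a half-line); equivalently it is an isolated point of $\operatorname{supp}\rho^+_x$, so $\{\lambda_0\}$ is relatively open there and $\kappa_0:=\rho^+_x(\{\lambda_0\})>0$. Splitting off this atom in the representation above I would write $m_+(x,\lambda)=-\kappa_0/(\lambda-\lambda_0)+h(\lambda)$ with $h$ holomorphic near $\lambda_0$ (the remaining integral is over $\operatorname{supp}\rho^+_x\setminus\{\lambda_0\}$, which stays a positive distance from $\lambda_0$), so $m_+(x,\cdot)$ has a simple pole at $\lambda_0$ with residue $-\kappa_0<0$. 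In ODE terms this records that the Weyl solution $\varphi^+_b(\cdot,\lambda_0)$ is the $L^2(\mathbb R^+,ydx)$ Dirichlet eigenfunction of $\mathcal L^+_b$ at $\lambda_0$ and so vanishes at $0$, while $D\varphi^+_b(0,\lambda_0)\neq0$, which is precisely why $m_+=D\varphi^+_b(0,\cdot)/\varphi^+_b(0,\cdot)$ blows up; the same with $-m_-$ and $\mathcal L^-_b$ settles the claim for $m_-$.

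I expect the main obstacle to lie not in the manipulations above but in justifying the Herglotz framework itself in the present generality: namely the identification $\operatorname{supp}\rho^+_x=\Sigma^+_b$ (equivalently, that the $m$-function sees exactly the spectrum of the half-line operator), together with the verification that the whole Weyl--Titchmarsh apparatus — including the eigenvalue$\leftrightarrow$atom correspondence — remains valid for the weighted operator on $L^2(\mathbb R,ydx)$ rather than on $L^2(\mathbb R)$; this is what I would cite from \cite{FJZ,JZ1}. I would also point out that this Proposition cannot be obtained directly from the exponential dichotomy of \eqref{fam} over $\mathcal A$ used above, because a point of $\mathbb R\setminus\Sigma^+_b$ may still belong to $\Sigma_b$ (e.g.\ a point of $\Sigma^-_b$, or a whole-line bound state sitting in a spectral gap of $\mathcal L^+_b$), so \eqref{fam} need not have an exponential dichotomy there; the argument has to use the half-line operator $\mathcal L^+_b$ on its own.
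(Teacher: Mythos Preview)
Your proof plan is correct, and in fact more detailed than what the paper itself provides: the paper does not give a proof of this proposition at all, but merely states it as a consequence of ``the observations above, together with other stuff (see, i.e., \cite{FJZ,JZ1})''. Your Herglotz-representation argument --- identifying the representing measure of $m_+(x,\cdot)$ with the spectral measure of the half-line operator $\mathcal L^+_b$ via Weyl--Titchmarsh--Kodaira theory, then reading off holomorphy on the complement of the support and simple poles at isolated atoms --- is precisely the standard route and is presumably what the cited references contain. Your closing remark, that the exponential-dichotomy characterization of the \emph{full-line} spectrum $\Sigma_b$ is insufficient here because $\mathbb R\setminus\Sigma^+_b$ may meet $\Sigma_b$, is a useful clarification that the paper leaves implicit.
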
 Notice that, although the spectrum $\Sigma_a$ of $\mathcal L_a$ is invariant under the flow $\tau_x$, the spectra of $\mathcal L_{\tau_x(a)}^\pm$ are not in general, since the isolated eigenvalues may vary with respect to $x\in\mathbb R$.

Next, the diagonal zero-value of the Green's function $\mathcal G_a(\lambda)$ for the operator $\mathcal L_a$ can be defined as $$\mathcal G_a(\lambda):=\dfrac{y(0)}{m_-(a,\lambda)-m_+(a,\lambda)}\s(\Im\lambda\neq 0)$$ and corresponds to the value $\mathcal G_a(0,0,\lambda)$ of the classical Green's function for the operator $\mathcal L_a$. Actually $\mathcal G_a(\lambda)$ admits nontangential limits $$\mathcal G_a(\eta):=\lim_{\ep\rightarrow \infty}\mathcal G_a(\eta+i\ep)$$ for a.a. $\eta\in\mathbb R$. Acting by the translation, we have the function $$\mathcal G(x,\lambda):=\dfrac{y(x)}{m_-(x,\lambda)-m_+(x,\lambda)},$$ which represents the diagonal zero-value of the Green's function of the operator $\mathcal L_{\tau_x(a)}.$ Moreover, it is easy to show that $\mathcal G(x,\lambda)$ equals the diagonal Green's function $\mathcal G(x,x,\lambda)$ of the operator $\mathcal L_a$.
The behavior of the Green's function at real values determines particularly important subsets of $\mathcal E_2$ (see \cite{FJZ,JZ1}). 

\bigskip

Here and throughout all the paper, we will consider elements $a\in\mathcal E_2$ which satisfy the following basic assumption
\begin{hy1}\label{H1}

The absolutely continuous spectrum $\Sigma^{(ac)}_a\subset \Sigma_a$ of $\mathcal L_a$ contains a half-line $[\lambda_0,\infty)\subset\mathbb R$, for some $\lambda_0\in\mathbb R$.\end{hy1}

%\begin{hy}\label{hp} We assume that $a\in\mathcal E_2$ satisfies the following:
%\begin{itemize}\item[$(H1)$] the absolutely continuous spectrum $\Sigma^{(ac)}_a\subset \Sigma_a$ of $L_a$ is the half-line $[\lambda_0,\infty)%\subset\mathbb R$, for some $\lambda_0\in\mathbb R$.
%\item[$(H2)$] $\Re \mathcal G_a(x,\eta)=0$ for a.a. $\eta\in [\lambda_0,\infty)$ and for all $x\in\mathbb R$.\end{itemize}\end{hy}

\section{The Zero-curvature formalism}

In this section we discuss the zero-curvature formulation for the hierarchies of evolution equations we will study.  As before, let $$A=\begin{pmatrix}0&1\\ \\q-\lambda y&0\end{pmatrix}$$ be the Sturm-Liouville matrix associated to the equation \begin{equation}\tag{$SL$}\label{31}\begin{pmatrix}\varphi\\ \\\varphi'\end{pmatrix}'=A\begin{pmatrix}\varphi\\ \\\varphi'\end{pmatrix},\end{equation} where $\varphi\in \mathcal D$, and the pair $(p,y)\in\mathcal E_2$.

We now introduce a time-dependence of the system, so that the variable $t$ is viewed as a parameter. Hence, from now on, $A$ depends on $x,t$ and $\lambda$, i.e., $$A(x,t,\lambda)=\begin{pmatrix}0&1\\ \\q(x,t)-\lambda y(x,t)&0\end{pmatrix}.$$
For the moment, we continue to assume that for every $t\in\mathbb R$, the pairs $(q(t,\cdot),y(t,\cdot))\in\mathcal E_2$.
Clearly, the equation \eqref{31} depends on the parameter $t$, so the solutions $\varphi$ do as well, obtaining a family \begin{equation}\tag{$SL_t$}\label{31t}\begin{pmatrix}\varphi(x,t,\lambda)\\ \\\varphi'(x,t,\lambda)\end{pmatrix}'=A(x,t,\lambda)\begin{pmatrix}\varphi(x,t,\lambda)\\ \\\varphi'(x,t,\lambda)\end{pmatrix}.\end{equation}
It is obvious that \eqref{31t} is equivalent to the family of eigenvalue equations \begin{equation}\label{E}-\varphi''(x,t)+q(x,t)\varphi=\lambda y(x,t)\varphi(x,t)\end{equation} and to the family of  Sturm-Liouville operators \begin{equation}\label{sl} \dfrac{1}{y(\cdot,t)}(-D^2+q(\cdot,t))\end{equation} in $L^2(\mathbb R,y(\cdot,t)dx)$.

Now, let us view $A$ as acting of the phase space $(\varphi,\varphi')$. So, if $\Phi$ denotes the column $\Phi=\begin{pmatrix}\varphi\\ \varphi'\end{pmatrix}$, then $A(\Phi)=A\Phi.$ Abusing slightly the notation, we denote by $D$ is the differentiation operator also in the phase space, i.e., $$D=\begin{pmatrix}\dfrac{\partial}{\partial x}&0\\ \\ 0&\dfrac{\partial}{\partial x}\end{pmatrix}$$ (the context will make clear the meaning of $D$). Let us consider, in the phase space, the operator $$L=D-A.$$ Note that, if $\Phi$ is a solution of the equation \eqref{31}, then $L(\Phi)=L\Phi=0.$ 

From now on, we will use the notation $u_t$, $u_x$, $u_{tt}$, $u_{xx}$ etc., to denote partial derivatives, together with the easier notation $u'$ and $\dot{u}$ for the standard derivatives with respect to $x$ and $t$ respectively. The usage will be determined by the context.

Define the operator  $$A_t=\begin{pmatrix}0&0\\ \\q_t-\lambda y_t&0\end{pmatrix},$$ and assume that there exists a matrix $B=B(x,t,\lambda)$ such that   the so-called \bf zero-curvature relation \rm holds: \begin{equation}\label{zc} A_t-B_x+[A,B]=0,\end{equation} where $[A,B]=AB-BA$ is the commutator of $A$ and $B$. This relation has several interesting consequences. First of all, from 
$$[L,B]=(D-A)B-B(D-A)=DB-AB-BD-BA=B_x-[A,B],$$ it follows that \begin{equation}\label{re1}A_t=[L,B].\end{equation}

Another fundamental consequence of \eqref{zc} is that the spectrum of $\mathcal L_a$ does not vary with respect to $t\in\mathbb R$, and in fact we have the following 

\begin{thm}\label{inv} Let $A(x,t,\lambda)$ as  above. Assume that there exists a matrix $B(x,t,\lambda)$ such that the zero-curvature relation \eqref{zc} holds.  Then the eigenvalues of the equation \eqref{E} {\bf do not} vary with $t$, i.e.,  $\lambda_\ast$ is an eigenvalue for the equation \eqref{E} for $t=0$ if and only if $\lambda_\ast$ is an eigenvalue of \eqref{E} for every $t\in\mathbb R$. Moreover, if $\psi$ is the corresponding eigenfunction then ,  $$\psi_t=T\psi+U\psi_x+c(t,\lambda)\psi,$$ where $c(t,\lambda)$ arises as a constant of integration. The above relation in turn implies that, if $\Psi=\begin{pmatrix}\psi\\\psi'\end{pmatrix}$, then   \begin{equation}\label{re10}\Psi_t=B\Psi+c(t,\lambda)\Psi, \end{equation}at every eigenvalue.\end{thm}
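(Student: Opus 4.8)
The plan is to exploit the operator identity \eqref{re1}, namely $A_t=[L,B]$ with $L=D-A$, and to apply both sides to an eigenfunction. Fix $\lambda=\lambda_\ast$ an eigenvalue of \eqref{E} at $t=0$, with $L^2(\mathbb R,y\,dx)$-eigenfunction $\psi(x,0)$, and let $\Psi=\begin{pmatrix}\psi\\ \psi'\end{pmatrix}$, so that $L\Psi=0$. Differentiating $L\Psi=0$ in $t$ gives $L_t\Psi+L\Psi_t=0$; since $L_t=-A_t$ (because $D$ has no $t$-dependence), this reads $L\Psi_t=A_t\Psi$. Now substitute $A_t=[L,B]=LB-BL$ and use $L\Psi=0$ to obtain $L\Psi_t=LB\Psi$, i.e.\ $L(\Psi_t-B\Psi)=0$. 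Thus $\Psi_t-B\Psi$ is again a solution of the (linear, second-order) system \eqref{31}; since the solution space of \eqref{31} is spanned by $\Psi$ together with a second, non-$L^2$ solution, and since $\Psi_t-B\Psi$ will be shown to stay in $L^2(\mathbb R^\pm,y\,dx)$, it must be a multiple of $\Psi$, say $\Psi_t-B\Psi=c(t,\lambda)\Psi$. This is exactly \eqref{re10}. Reading the top component gives $\psi_t=(B_{11}\psi+B_{12}\psi')+c(t,\lambda)\psi$, which has the asserted form $\psi_t=T\psi+U\psi_x+c(t,\lambda)\psi$ with $T=B_{11}$, $U=B_{12}$.

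Next I would argue isospectrality. Given \eqref{re10}, one checks directly that if $\psi(\cdot,0)\in L^2(\mathbb R,y\,dx)$ then $\psi(\cdot,t)\in L^2(\mathbb R,y\,dx)$ for all $t$: the map $t\mapsto\Psi(\cdot,t)$ solves a linear ODE in $t$ whose coefficients are the (bounded, on compact sets) entries of $B$ plus the scalar $c(t,\lambda)$, and the exponential-dichotomy estimates of the Definition control the decay uniformly in $t$ — the stable/unstable subspaces $\IM P$, $\Ker P$ are preserved because $B$ maps solutions of \eqref{31} to solutions of \eqref{31}. Hence $\lambda_\ast$ remains an eigenvalue; running the argument backwards in $t$ gives the converse. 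Equivalently, and perhaps more cleanly, one may invoke Theorem~3.2 (the exponential-dichotomy characterization of the spectrum): \eqref{zc} says the ``curvature'' of the extended connection vanishes, so the fundamental matrix $\Phi_a(x,t)$ of \eqref{31t} factors as $\Phi_a(x,t)=C(x,t)\Phi_a(x,0)C(0,t)^{-1}$ for a suitable invertible $C$ solving $C_t=BC$; this conjugation carries a dichotomy at time $0$ to a dichotomy at time $t$ with the same projection (up to the conjugation), so $\Sigma_{ed}$, and therefore $\Sigma_a$, is $t$-independent.

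The main obstacle is the integrability/regularity bookkeeping that legitimizes the step ``$\Psi_t-B\Psi$ is an $L^2$ solution, hence a constant multiple of $\Psi$''. Two points need care: first, one must know that $\Psi_t$ genuinely exists and that differentiation in $t$ commutes with the spatial operators — this requires the pair $(q,y)$ (and $B$) to depend smoothly enough on $t$ and to remain in $\mathcal E_2$, which is granted by the standing assumptions in this section. Second, one must verify that $B\Psi$ does not destroy the $L^2$-decay, i.e.\ that $B=B(x,t,\lambda)$ has entries that are bounded on $\mathbb R$ (or at worst grow slower than the exponential decay rate $\delta$ from the dichotomy); in the cases of interest $B$ is polynomial in $\lambda$ with coefficients built from $q,y$ and their derivatives, hence bounded in $x$, so this is fine, but it is the place where an unwarranted choice of $B$ could break the theorem. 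Once these two points are in hand, the constant-of-integration $c(t,\lambda)$ appears simply because the quotient $(\Psi_t-B\Psi)/\Psi$ is annihilated by $L$ and spatially constant along the eigendirection, and no further computation is needed.
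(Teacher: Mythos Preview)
Your argument is valid but takes a genuinely different route from the paper's. You fix $\lambda=\lambda_\ast$, derive $L(\Psi_t-B\Psi)=0$, and identify $\Psi_t-B\Psi$ as a scalar multiple of $\Psi$ by arguing it remains in $L^2$ (via boundedness of the entries of $B$ and dichotomy preservation); isospectrality then follows by propagating the eigenfunction in $t$, or alternatively by your conjugation argument on the fundamental matrix, which in fact yields the stronger conclusion that all of $\Sigma_{ed}$ is $t$-independent. The paper instead lets the eigenvalue drift a priori, $\lambda=\lambda(t)$: the $t$-derivative of $L\Psi=0$ then carries an extra term $\mathcal Y\Psi$ with $\mathcal Y=\left(\begin{smallmatrix}0&0\\ \dot\lambda\,y&0\end{smallmatrix}\right)$, yielding $\mathcal Y\Psi=L(\Psi_t-B\Psi)$, whose second component reads $\dot\lambda\,y\psi=S(\dot\psi-T\psi-U\psi')$ for $S=D^2-(q-\lambda y)$; pairing against $\psi$ and using that $S$ is self-adjoint with $S\psi=0$ gives $\dot\lambda\,\|\sqrt{y}\,\psi\|_2^2=0$, hence $\dot\lambda=0$, after which $\dot\psi-T\psi-U\psi'\in\ker S$ forces it to be $c(t,\lambda)\psi$. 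So the paper's route is shorter for the eigenvalue statement and never needs to check $L^2$-preservation of $B\Psi$ or invoke dichotomy, while your route is more dynamical and delivers more. One ordering quibble: as written you differentiate $L\Psi=0$ in $t$ before $\Psi(\cdot,t)$ has been defined for $t\neq0$; the clean fix is to \emph{define} $\Psi(\cdot,t)$ by the $t$-ODE $\Psi_t=B\Psi$ and then verify (using $A_t=B_x-[A,B]$) that $L\Psi=0$ propagates, which removes the apparent circularity you allude to in your last paragraph.
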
 

\begin{proof}

Let us assume that $\psi(x,t)$ is an eigenfunction of \eqref{E}. Then, a priori, as $t$ varies, also $\lambda$ can change, i.e., $\lambda=\lambda(t)$. If $\Psi=\begin{pmatrix}\psi \\ \psi'\end{pmatrix}$, then $L\psi=0$. We have $$0=\dfrac{\partial}{\partial t}(L\Psi)=\dfrac{\partial L}{\partial t}\Psi+L\dfrac{\partial\Psi}{\partial t}=-\dfrac{\partial A}{\partial t}\Psi+L\dfrac{\partial\Psi}{\partial t}=-A_t\Psi-\mathcal Y\Psi+L\dfrac{\partial\Psi}{\partial t},$$ where $$\mathcal Y=\begin{pmatrix}0&0\\ \\\dfrac{d\lambda}{dt}y&0\end{pmatrix}.$$ It follows that, using \eqref{re1},
$$\mathcal Y\Psi=-[L,B]\Psi+L\dfrac{\partial\Psi}{\partial t}=-LB\Psi+L\dfrac{\partial\Psi}{\partial t},$$ hence \begin{equation}\label{re2}\mathcal Y\Psi=L\left(\dfrac{\partial\Psi}{\partial t}-B\Psi\right)\end{equation} Now, notice that, if $\Phi=\begin{pmatrix}\varphi \\ \varphi'\end{pmatrix}$, then $L$ maps $\Phi$ into  $$L\Phi=L\begin{pmatrix}\varphi\\ \varphi'\end{pmatrix}=\begin{pmatrix}0\\ \\ S(\varphi)\end{pmatrix},$$ where $$S(\varphi)=\varphi''-(q-\lambda y)\varphi.$$ Thus, if we write $$B=\begin{pmatrix}T&U\\ V&Z\end{pmatrix},$$ we can rewrite \eqref{re2} as follows \begin{equation}\label{re3}\dfrac{d\lambda}{dt}y\psi=S(\dot{\psi}-T\psi-U\psi').\end{equation} Now, the operator $S$ is self adjoint in $L^2(\mathbb R)$, and \begin{equation}\label{re5}\dfrac{d\lambda}{dt}\langle y\psi,\psi\rangle= \left\langle S(\dot{\psi}-T\psi-U\psi'),\psi\right\rangle=\left\langle \dot{\psi}-T\psi-U\psi',S(\psi)\right\rangle=0,\end{equation} hence $$\dfrac{d\lambda}{dt}||\sqrt y\cdot\psi||_2=0\Rightarrow \dfrac{d\lambda}{dt}=0.$$  Notice also that, from \eqref{re3}, since $\dfrac{d\lambda}{dt}=0$, we have that the function $\dot{\psi}-T\psi-U\psi'$ is an eigenfunction with the same $\lambda$ as eigenvalue, hence $$\dot{\psi}-T\psi-U\psi'=c(t,\lambda)\psi.$$ \end{proof}

\begin{rem} It is important to highlight that  the relations \eqref{re3} and \eqref{zc} remain valid if we add to $B$ the identity operator times a function $c(t,\lambda)$. \end{rem}

Now, we try to determine the structure of the matrix  $B(x,t,\lambda)$.
Write, as above, $$B=\begin{pmatrix}T&U\\ \\V&Z\end{pmatrix}$$ and use \eqref{zc} and the above observations to show that \begin{equation}\label{sis}\begin{cases}T_x=-Z_x
\\ U_x=-2Z=2T\\ -Z_x+(q-\lambda y)U-V=0\\q_t-\lambda y_t-V_x-U_x(q-\lambda y)=0\end{cases}\end{equation}

These relations imply that $$B=\begin{pmatrix}-\dfrac{U_x}{2}+c(t,\lambda) & &U\\ \\-\dfrac{U_{xx}}{2}+(q-\lambda y)U& & \dfrac{U_x}{2}+c(t,\lambda)\end{pmatrix},$$ together with the \bf compatibility condition \rm \begin{equation}\label{cc}q_t-\lambda y_t+\dfrac{U_{xxx}}{2}-2U_x(q-\lambda y)-U(q-\lambda y)_x=0,\end{equation}
where $U=U(x,t,\lambda)$ is a function whose regularity will be declared in the following. 

Note that with this notation, if $\varphi$ is a solution of \eqref{E},  then \begin{equation}\label{comp1}\begin{cases}-\varphi_{xx}+q\varphi=\lambda y\varphi\\ \\ \varphi_t=-\dfrac{U_x}{2}\varphi+U\varphi_x+c(t,\lambda)\varphi.\end{cases}\end{equation}

The compatibility condition \eqref{cc} is of fundamental importance. It will determine the hierarchy of evolution equations we are going to study. Briefly speaking, the existence of a function $U(x,t,\lambda)$ which defines $B(x,t,\lambda)$ is strictly related to two evolution equations. Let us make some simple examples. 

\begin{ex}
\rm 

{(\bf 1)}\;\; If we assume that $U(x,t,\lambda)=u(t,x)$, i.e., $U(x,t,\lambda)$ is a function which does not depend on $\lambda$, then \eqref{cc} translates to $$q_t-\lambda y_t+\dfrac{u_{xxx}}{2}-2u_x(q-\lambda y)-u(q_x-\lambda y_x)=0,$$ and equating the powers of $\lambda$ we obtain $$\begin{cases}q_t+\dfrac{u_{xxx}}{2}-2u_xq-uq_x=0\\ \\-y_t-2u_xy+uy_x=0.\end{cases}$$ Now, if $u(x,t)$ is fixed, then we obtain two evolution equations, one for $q$ and another for $y$. Take, for instance, $u(x,t)=c$, $c\in\mathbb R$. Then we have $$\begin{cases}q_t-cq_x=0\\ \\y_t+cy_x=0\end{cases},$$ which clearly gives, if $q(x,0)=q_0(x)$ and $y(x,0)=y_0(x)$, $$\begin{cases}q(x,t)=q_0(x+ct)\\ y(t,x)=y_0(x-ct)\end{cases}.$$ Note that, in this case $$B=\begin{pmatrix}0&c\\ \\c(q-\lambda y)&0\end{pmatrix},$$ which express the fact that the solutions $\Phi(x,t)$ of \eqref{31t} are given by $\Phi_0(x+ct)$ where $\Phi_0(x)$ is a solution of  \eqref{31}. 

{(\bf 2)}\;\;Another interesting case  can be obtained when we \it fix \rm the evolution of one between $q$ and $y$. In this case, the corresponding system has as unknowns both $u$ and the other between $q$ and $y$. Let us again consider the previous case. If we fix a function $y(x,t)>\delta>0$, then we can retrieve $u$ by solving a first order ordinary equation, namely $$u_x=\dfrac{y_x}{2y}u-\dfrac{y_t}{2y}.$$ Then the function $q$ must solve an evolution equation, which is the compatibility condition for the existence of the matrix $B$, hence for the validity of the zero-curvature relation. In our example, we easily obtain $$u(x,t)=\sqrt{y(x,t)}\left(c(t)-\int_0^x\dfrac{y_t(s,t)}{2y^{3/2}(s,t)}ds\right),$$ and putting this quantity in the equation for $q$ we have the desired evolution equation. As an example, assume that $y(x,t)=y(x)$, then $$u(x,t)=c(t)\sqrt{y(x)},$$
  and \begin{equation}\label{ex}q_t=c(t)\left(2(\sqrt{y})_xq+\sqrt yq_x-\dfrac{1}{2}(\sqrt y)_{xxx}\right).\end{equation} As we will see in the following, the equation \eqref{ex} admits a special stationary solution, namely if we set $z=\dfrac{1}{y}$, then the function of the only $x$-variable $$\tilde z(x)=-\left(\dfrac{z'}{4z}\right)'+\left(\dfrac{z'}{4z}\right)^2=-\dfrac{z''}{4z}+\dfrac{5}{16}\left(\dfrac{z'}{z}\right)^2$$ solves the equation \eqref{ex}, since $\tilde z_t=0$. As another example, let us fix $y(x,t)=1$. Then $u(x,t)=u_0(t)$ and the equation for $q$ reads as $$q_t-u_0(t)q_x=0,$$ which can be easily solved to give $$q(x,t)=q(x+U_0(t),0),$$ where $$U_0(t)=\int_0^t u_0(s)ds.$$ \finedim
  \end{ex}

\bigskip

 It is quite clear that, a priori, $U(x,t,\lambda)$ can be any sufficiently regular function. But then it would be unclear how to recover the compatibility condition in a form which does not depend on the parameter $\lambda$. Before explaining the nature of the function $U(x,t,\lambda)$ we will consider, it is convenient to observe that we can replace the zero-curvature relation \eqref{zc} with \begin{equation}\label{zc1}\eta(\lambda)A_t-B_x+[A,B]=0,\end{equation} where $\eta(\lambda)$ is any continuous function of the variable $\lambda\in\mathbb C$. In this case, the compatibility conditions \eqref{cc} and  \eqref{comp1} translate to
 
\begin{equation}\label{cc1}\eta(\lambda)\left(q_t-\lambda y_t\right)+\dfrac{U_{xxx}}{2}-2U_x(q-\lambda y)-U(q-\lambda y)_x=0,\end{equation}
 and 
  \begin{equation}\label{comp2}\begin{cases}-\varphi_{xx}+q\varphi=\lambda y\varphi\\ \\ \eta(\lambda)\varphi_t=-\dfrac{U_x}{2}\varphi+U\varphi_x+c(t,\lambda)\varphi.\end{cases}\end{equation}
 respectively. For clarity, let us summarize these facts into the following 
 
 \begin{prop}\label{P1} The zero-curvature relation \eqref{zc1} $$\eta(\lambda)A_t-B_x+[A,B]=0$$ is valid if and only if there exists a function $U(x,t,\lambda)$ and a function $\eta(\lambda)$ such that $$B(x,t,\lambda)=\begin{pmatrix}-\dfrac{U_x}{2}+c(t,\lambda) & &U\\ \\-\dfrac{U_{xx}}{2}+(q-\lambda y)U& & \dfrac{U_x}{2}+c(t,\lambda)\end{pmatrix}$$ and the compatibility condition \eqref{cc1} $$\eta(\lambda)\left(q_t-\lambda y_t\right)+\dfrac{U_{xxx}}{2}-2U_x(q-\lambda y)-U(q-\lambda y)_x=0$$ hold simultaneously. This implies that for every solution $\varphi$ of the equation \eqref{E}
 $$-\varphi_{xx}+q\varphi=\lambda y\varphi$$  we have $$\eta(\lambda)\varphi_t=-\dfrac{U_x}{2}\varphi+U\varphi_x+c(t,\lambda)\varphi,$$ where $c(t,\lambda)$ is as in Theorem \ref{inv}. The eigenvalues of the spectral problem $$-\varphi_{xx}+q\varphi=\lambda y\varphi$$ on $L^2(\mathbb R,ydx)$ do not depend on $t\in\mathbb R$.\end{prop}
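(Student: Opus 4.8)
Proposition \ref{P1} is essentially a bookkeeping statement: it packages the computations that precede it. My plan is to prove it by verifying the two implications directly, treating the zero-curvature relation \eqref{zc1} entrywise.

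First I would establish the ``only if'' direction. Assuming \eqref{zc1} holds with an unknown $B = \begin{pmatrix} T & U \\ V & Z \end{pmatrix}$, I would compute $[A,B] = AB - BA$ explicitly using $A = \begin{pmatrix} 0 & 1 \\ q - \lambda y & 0 \end{pmatrix}$, and also $A_t = \begin{pmatrix} 0 & 0 \\ q_t - \lambda y_t & 0 \end{pmatrix}$ (note $A_t$ does not acquire an $\eta$-factor; rather $\eta$ multiplies it in the relation). Equating the four entries of $\eta(\lambda) A_t - B_x + [A,B] = 0$ to zero reproduces the system \eqref{sis} with the left-hand side of its last equation multiplied by $\eta(\lambda)$: the $(1,1)$ entry gives $-T_x - Z_x \cdot 0 \ldots$ — more precisely the diagonal entries give $T_x = -Z_x$ and the off-diagonal structure gives $U_x = 2T = -2Z$ (so without loss of generality, absorbing the integration constant, $T = -Z + c(t,\lambda)$ with $U_x = 2T - 2c = -2Z + 2c$, which after the shift by $c\cdot I$ noted in the Remark yields $T = -U_x/2 + c$, $Z = U_x/2 + c$). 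The remaining two entries give $V = -Z_x + (q-\lambda y)U = -U_{xx}/2 + (q-\lambda y)U$ and then $\eta(\lambda)(q_t - \lambda y_t) - V_x - U_x(q - \lambda y) = 0$; substituting the expression for $V$ and simplifying $-V_x = U_{xxx}/2 - (q-\lambda y)_x U - (q - \lambda y) U_x$ produces exactly \eqref{cc1}. This pins down $B$ in the stated form and yields the compatibility condition. The ``if'' direction is the reverse substitution: given $U$, $\eta$, the displayed $B$, and \eqref{cc1}, one checks by the same entrywise computation that \eqref{zc1} holds — this is purely mechanical.

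Next I would derive the consequence for solutions $\varphi$ of \eqref{E}. Here I would invoke Theorem \ref{inv} (whose proof already handles the case $\eta \equiv 1$; the general $\eta(\lambda)$ case is identical since $\eta(\lambda)$ is constant in $x$ and $t$ and simply rides along through \eqref{re3}--\eqref{re5}): differentiating $L\Psi = 0$ in $t$, using $A_t = [L,B]$ scaled by $\eta$, and the self-adjointness of $S = D^2 - (q - \lambda y)$ in $L^2(\mathbb R)$ gives $\eta(\lambda)\,\frac{d\lambda}{dt}\,\|\sqrt{y}\,\psi\|_2^2 = 0$, hence $\frac{d\lambda}{dt} = 0$ at every eigenvalue. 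The same computation then forces $\eta(\lambda)\dot\psi + \frac{U_x}{2}\psi - U\psi' $ to be an eigenfunction for the same $\lambda$, so it equals $c(t,\lambda)\psi$, which is the second line of \eqref{comp2}. Since eigenvalues are exactly the values of $\lambda$ at which $E_a(\varphi,\lambda)$ has an $L^2(\mathbb R,ydx)$ solution, their $t$-independence is immediate.

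The only genuine subtlety — and the step I would treat most carefully — is the role of $\eta(\lambda)$ and the integration ``constant'' $c(t,\lambda)$. One must check that multiplying $A_t$ by a $\lambda$-dependent (but $x,t$-independent) factor does not disturb the algebra: indeed $\eta(\lambda)$ commutes with $D$ and with matrix multiplication by constants in $x,t$, so $\eta(\lambda) A_t = [L, \eta(\lambda) B']$ for a suitable $B'$ is not quite what happens — rather $\eta(\lambda) A_t = \eta(\lambda)[L,B]$ would need $B$ rescaled, but the correct reading is that \eqref{cc1} already absorbs $\eta$ on the $A_t$ side while $B$ itself is \emph{not} rescaled, so one must verify the entrywise identities land precisely on \eqref{cc1} and not on some $\eta$-weighted variant of the $B$-entries. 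This is exactly where the explicit four-equation computation pays off, and it is the part I would write out in full rather than assert. Everything else follows by quoting the earlier displays and Theorem \ref{inv}.
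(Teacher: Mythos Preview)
Your proposal is correct and follows essentially the same approach as the paper: Proposition~\ref{P1} is stated there explicitly as a summary of the preceding computations (the entrywise system~\eqref{sis}, the resulting form of $B$, the compatibility condition~\eqref{cc}/\eqref{cc1}, the relations~\eqref{comp1}/\eqref{comp2}, and Theorem~\ref{inv}), and you are proposing to reproduce exactly those computations. Your handling of the integration constant $c(t,\lambda)$ and the passage from $\eta\equiv 1$ to general $\eta(\lambda)$ mirrors the paper's own treatment (cf.\ the Remark after Theorem~\ref{inv}); the ``subtlety'' you flag about $\eta(\lambda)$ is less delicate than you suggest, since $\eta$ enters only the $(2,1)$ entry of $\eta(\lambda)A_t$ and hence lands directly in~\eqref{cc1} without touching the other three equations.
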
 
 
 \section{The Sturm-Liouville hierarchy of evolution equations}

It is the moment to declare how to take $\eta(\lambda)$ and $U(x,t,\lambda)$ in order to have a well-determined compatibility relation. We take $U(x,t,\lambda)$ to be a polynomial of degree $r$ in the variable $\lambda\in \mathbb C$, whose coefficients depend on $x$ and $t$: $$U(x,t,\lambda)=\sum_{j=0}^r u_j(x,t)\lambda^j.$$ The coefficients $u_j(x,t)$ ($j=0,\dots,r$) are continuous functions in the variable $t$ for every fixed $x\in\mathbb R$ and of class $C^3$ in the variable $x$ for every fixed $t\in\mathbb R$. Moreover $\eta(\lambda)=\lambda^k$, where $0\leq k\leq r.$ It is convenient to write down these assumptions as follows:
 \begin{hy}\label{H} For a given system  \eqref{comp2},  \begin{equation*}\begin{cases}-\varphi_{xx}+q\varphi=\lambda y\varphi\\ \\ \eta(\lambda)\varphi_t=-\dfrac{U_x}{2}\varphi+U\varphi_x+c(t,\lambda)\varphi,\end{cases}\end{equation*} we assume that 
 
 \begin{enumerate}\item the pair $(q(x,t),y(x,t))\in\mathcal E_2$ for every $t\in\mathbb R$;
 \item $q,q_t,y,y_t\in C^{2r+1}(\mathbb R)\times C(\mathbb R)$;  \item the function $U(x,t,\lambda)$ is a polynomial $$U(x,t,\lambda)=\sum_{j=0}^r u_j(x,t)\lambda^j,$$ where the coefficients $u_j(x,t)$ ($j=0,\dots,r$) are continuous functions in the variable $t$ for every fixed $x\in\mathbb R$ and of class $C^{2r+1}$ in the variable $x$ for every fixed $t\in\mathbb R$; \item $\eta(\lambda)=\lambda^k$, where $0\leq k\leq r.$\end{enumerate}\end{hy}
 Note that Hypotheses \ref{H} can be clearly weakened in many ways. One is to consider $U$ to be an entire function (see \cite{JZ7,Z2}), another is to take $\eta(\lambda)$ to be any polynomial in $\lambda$ with constant coefficients (see \cite{JZ5,JZ6,Z1}).
 
 Let us make some concrete examples. 
\begin{ex}\rm
 \phantom{a}
 \begin{itemize} \item Take $y\equiv 1$,  $r=1$ and $k=0$. We have $U=u_1\lambda+u_0$ and \eqref{cc1} reads $$q_t=2(u_{1x}\lambda+u_{0,x})(q-\lambda)+(u_1\lambda+u_0)q_x-\dfrac{u_{1,xxx}\lambda+u_{0,xxx}}{2}.$$
 Ordering with respect to $\lambda$ we obtain $$\begin{cases}-2u_{1,x}=0\\ \\2u_{1x}q-2u_{0,x}+u_1q_x-\dfrac{u_{1,xxx}}{2}=0\\ \\q_t=2u_{0,x}q+u_0q_x-\dfrac{u_{0,xxx}}{2}.\end{cases}$$
 We can take $u_1=1$, so that $q_x=2u_{0,x}$. Hence if we take $q=2u_0$ we have that the above system is solvable if $q$ is the solution of the evolution  equation   $$q_t=\dfrac{3}{2}qq_x-\dfrac{q_{xxx}}{4},$$ which is the well-known Korteweg-de Vries (K-dV) equation. So, if $q$ satisfies the K-dV equation, then the zero-curvature relation is valid, and the eigenvalues of the eigenvalue equation \eqref{E} on $L^2(\mathbb R)$, which in this case can be written as $$-\varphi_{xx}+q\varphi=\lambda\varphi$$ (i.e., the standard Schr\acc odinger equation),
 do not vary with $t\in\mathbb R$.
 \item Let $q\equiv 1$, $k=r=1.$ Then we have $$\begin{cases}2u_{0,x}-\dfrac{u_{0,xxx}}{2}=0\\ \\2u_{1,x}-2u_{0,x}y-u_0y_x-\dfrac{u_{1,xxx}}{2}=0\\ \\y_t=2u_{1,x}y+u_1y_x.\end{cases}$$ So we can take $u_0=1$ and the system becomes $$\begin{cases}y_x=2u_{1x}-\dfrac{u_{1,xxx}}{2}\Rightarrow y(x,t)=2u_{1}(x,t)-\dfrac{u_{1,xx}}{2}+\omega(t)\\ \\ y_t=2u_{1x}y+u_1y_x.\end{cases}$$ Writing $u$ for $u_1$ and taking $\omega(t)$ as a constant $\omega(t)=\omega_0$, we obtain the equation $$2u_t-\dfrac{u_{xxt}}{2}=2u_x\left(2u-\dfrac{u_{xx}}{2}\right)+2\omega_0 u_x+u\left(2u_x-\dfrac{u_{xxx}}{2}\right), $$ or \begin{equation}\label{CH} 4u_t-u_{xxt}=12uu_x-2u_xu_{xx}-uu_{xxx}+4\omega_0 u_x,\end{equation} which is an equivalent form of  the Camassa-Holm equation \cite{Con1,Con2,Con3} (the original equation in the literature is obtained via a change of variables). So, if $u$ is a solution of the equation \eqref{CH}, and if $y$ is given from $u$ by $$y=2u-\dfrac{u_{xx}}{2}+\omega_0,$$ then the zero-curvature relation $\lambda A_t-B_x+[A,B]=0$ holds, and the eigenvalues of the equation $$-\varphi_{xx}+\varphi=\lambda y\varphi$$  on $L^2(\mathbb R,ydx)$ do not vary with $t\in\mathbb R$.  Note that we can take $\omega(t)$ to be not a constant, and obtain the same result by redefining $y(x,t)$ to be $y(x,t)-\omega(t)+\omega_0$. 
 
 \item Now take $y\equiv 1$, $r=2$ and $k=0$. Then we proceed as above and obtain the equation $$q_t=-\dfrac{1}{2}\left[\dfrac{-q_{xx}}{8}+\dfrac{3}{8}q^2\right]_{xxx}+q\left[-\dfrac{q_{xxx}}{4}+\dfrac{3}{2}qq_x\right]+q_x\left[-\dfrac{q_{xx}}{8}+\dfrac{3}{8}q^2\right],$$ which is the second order K-dV equation (see \cite{Le1,SW,Z2}).
 
 \item Take $q\equiv 1$, $r=2$ and $k=1$. Then we have, by choosing appropriate constants where possible, $$\begin{cases}u_0=1\\ \\ y=2u_1-\dfrac{u_{1,xx}}{2}\\ \\u_2=\dfrac{1}{\sqrt y} \\ y_t=2u_{1,x}y+u_1y_x+F(y),\;\;F(y)=\left(\dfrac{1}{2\sqrt y}\right)_{xxx}-2\left(\dfrac{1}{\sqrt y}\right)_x\end{cases},$$ which is a modified version of the  Camassa-Holm equation.

 \end{itemize}\end{ex}

\bigskip
 We now make some observations concerning the effects of the choice we made for $U$ and $\eta$, and we illustrate the general method to retrieve the evolution equation which establishes the compatibility condition for the zero-curvature \eqref{zc1} to hold. 

If we look at the examples above, it is clear that the evolution equation providing the compatibility condition for the zero-curvature to hold arises as an equation in a system of $r+2$ equations. We will describe this in detail.

Let us fix $k\leq r$. Then $U(x,t,\lambda)$ is a polynomial $$U(x,t,\lambda)=\sum_{j=0}^ru_j(x,t)\lambda^j.$$
The relation \eqref{cc1} becomes $$\lambda^k\left(q_t-\lambda y_t\right)=2(q-\lambda y)\left(\sum_{j=0}^ru_{j,x}\lambda^j\right)+
(q_x-\lambda y_x)\left(\sum_{j=0}^ru_j\lambda^j\right)-\dfrac{1}{2}\left(\sum_{j=0}^r u_{j,xxx}\lambda^j\right),$$ and comparing the powers of $\lambda$ we obtain the system ($j=1,\dots,r$) \begin{equation}\label{sys} \begin{cases}2u_{r,x}y+u_ry_x=\delta_{(r+1,k+1)}\cdot y_t\\ \vdots \\ 2u_{j,x}q-2u_{j-1,x}y+u_jq_x-u_{j-1}y_x-\dfrac{u_{j,xxx}}{2}=\delta_{(j,k)}\cdot q_t-\delta_{(j,k+1)}\cdot y_t\\ \vdots \\ 2u_{0,x}q+u_0q_x- \dfrac{u_{0,xxx}}{2}=\delta_{(0,k)}\cdot q_t,
 \end{cases}\end{equation} where, for $0\leq i,j\leq r$,  $\;\;\delta_{(i,j)}=\begin{cases}0,&i\neq j\\ 1,&i=j.\end{cases}$ 
 
 Let us analyze in detail what \eqref{sys} means. First of all, it is made of $r+2$ equations involving $q,y$, the coefficients $u_0,\dots,u_r$, and their derivatives. The time derivatives $q_t$ and $y_t$ appear only in two equations, namely the $k$-th and the $(k+1)$-th starting from below. So if, for instance, $k=0$, then $q_t$ appears in the first and $y_t$ in the second line from below. We need $r+1$ equations to recover recursively the coefficients $u_1,\dots,u_r$, while it remains only one equation to determine the time evolution of $q$ and $y$. It follows that we have to \bf fix \rm the time evolution of one between $q$ and $y$, and then use the equation containing explicitly the time evolution of the other function as the \bf compatibility condition \rm for the system \eqref{sys} to have a solution. So, the compatibility condition \eqref{cc1} is equivalent to require that one between $q$ and $y$ solve an evolution equation, defined via the remaining equation in the system \eqref{sys}.

 In view of the   above, we now give the following
\begin{defin}Let $0\leq k\leq r$ be fixed. \begin{itemize}\item If we fix a function $y(x,t)$ then the $r$-th equation in the Sturm-Liouville $(k,y)-$Hierarchy of evolution equations (briefly $H^q_r(k,y)$) is the evolution equation \begin{equation}\label{qH}\tag{$H^q_r(k,y)$} q_t=2u_{k,x}q+u_kq_x-2u_{k-1,x}y-u_{k-1}y_x-\dfrac{u_{k,xxx}}{2}.\end{equation}
\item If we fix a function $q(x,t)$ then the $r$-th equation in the Sturm-Liouville $(k,q)-$Hierarchy of evolution equations (briefly $H^y_r(k,q)$) is the evolution equation \begin{equation}\label{yH}\tag{$H^y_r(k,q)$} y_t=-2u_{k+1x}q-u_{k+1}q_x+2u_{k,x}y+u_{k}y_x+\dfrac{u_{k+1,xxx}}{2},\end{equation} with the convention that $u_{r+1}=0,$ for every chosen $r$.

\end{itemize}
\end{defin}

The main purpose of the paper, as already anticipated, is that of expressing  \eqref{sys} and the above evolution equations by using the properties of the Weyl functions, starting from some types of pairs $(q,y)\in\mathcal E_2$ which satisfy an additional assumption (besides Hypothesis \ref{H1}). This will allow to have a deep insight about  the spectral information of the related ($t$-dependent) family of Sturm-Liouville operators, and hence about the \it flow \rm determined by the solutions of the evolution equations.

\begin{rem}If $k=0$ and we fix $y(x,t)=1,$  $H^q_r(0,1)$ is the standard K-dV hierarchy \cite{Le1,Z2}. If, instead, we take $k=r$ and $q(x,t)=1$ we obtain the Camassa-Holm hierarchy $H^y_r(r,1)$ \cite{CH,Con2,GH,JZ4bis,Z1}. For example, if $k=r=2$ we  can take $$\begin{cases} u_0=1\\ \\ y=2u_{1}-\dfrac{u_{1,xx}}{2} \\ \\ 2u_2-\dfrac{u_{2,xx}}{2}=3u_1^2+\dfrac{1}{4}(u_{1,x})^2+\dfrac{u_1u_{1,xx}}{2}\\ \\ y_t=2u_{2,x}y+u_2y_x.\end{cases}$$ The last equation in the system is the second-order Camassa-Holm equation (in our notation, it is $H^y_2(2,1)$) \cite{Z1}. Notice that it can be expressed as an evolution equation for the function $u_1$ since both  $y$ and $u_2$ can be interpreted as functions of $u_1$ only.\end{rem}

The general method to derive the equation of the hierarchy is as follows. Fix $k\leq r$ and one between $q$ and $y$.  Use all the equations which do not contain $q_t$ and $y_t$ to determine recursively the coefficients $u_j$ for $j=0,\dots,k-1,k+2,\dots,r$, then use the two remaining equations: one (the one containing the time derivative of the function we have fixed) is used to determine recursively the remaining coefficient, the other furnishes the evolution equation of the hierarchy we are looking for. 

Before going forward, let us focus on an example which will have significance also in the following. First we prove the following 
\begin{lemma} Let  $y(x,t)$ be a function of class $C^3(\mathbb R)$ for every fixed $t\in\mathbb R$.  Define \begin{equation}\label{tq}\tilde q:=-\left(\dfrac{y_x}{4y}\right)_x+\left(\dfrac{y_x}{4y}\right)^2.\end{equation} Then, if $$\tilde u_0=\dfrac{1}{\sqrt y},$$ we have \begin{equation}\label{sta1} 2\tilde u_{0,x}(\tilde q+\lambda_0 y)+\tilde u_0(\tilde q_x+\lambda_0 y_x)-\dfrac{\tilde u_{0,xxx}}{2}=0,\end{equation} for every $\lambda_0\in\mathbb R$\end{lemma}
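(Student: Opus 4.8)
The plan is to exploit that the left-hand side of \eqref{sta1} is an affine function of $\lambda_0$:
$$\Big(2\tilde u_{0,x}\tilde q+\tilde u_0\tilde q_x-\tfrac12\tilde u_{0,xxx}\Big)+\lambda_0\Big(2\tilde u_{0,x}y+\tilde u_0 y_x\Big),$$
so it vanishes for every $\lambda_0\in\mathbb R$ exactly when both bracketed quantities vanish. The coefficient of $\lambda_0$ is disposed of at once: from $\tilde u_0=y^{-1/2}$ we get $\tilde u_{0,x}=-\tfrac12 y^{-3/2}y_x$, whence $2\tilde u_{0,x}y+\tilde u_0 y_x=-y^{-1/2}y_x+y^{-1/2}y_x=0$. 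Thus the whole statement reduces to the $\lambda_0$-independent identity $2\tilde u_{0,x}\tilde q+\tilde u_0\tilde q_x-\tfrac12\tilde u_{0,xxx}=0$.

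For this I would invoke the elementary identity, valid for any $C^3$ functions $u$ and $v$,
$$2u\Big(2u_x v+u v_x-\tfrac12 u_{xxx}\Big)=\partial_x\Big(2u^2 v-u u_{xx}+\tfrac12 u_x^2\Big),$$
which is just the product rule (the cross terms $\pm u_x u_{xx}$ arising from $-\partial_x(u u_{xx})$ and $\partial_x(\tfrac12 u_x^2)$ cancel). Applying it with $u=\tilde u_0$, $v=\tilde q$, it suffices to show that the density $E:=2\tilde u_0^2\tilde q-\tilde u_0\tilde u_{0,xx}+\tfrac12\tilde u_{0,x}^2$ is identically zero; then $E_x=0$ and, $\tilde u_0=y^{-1/2}$ being nowhere zero, division by $2\tilde u_0$ yields the claim. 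The advantage of proceeding this way is that $E$ involves only $\tilde u_{0,xx}$ and $\tilde q$, so no third $x$-derivative of $y$ enters the computation.

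It then remains to check $E\equiv 0$ by substitution. With $\tilde u_0=y^{-1/2}$ one has $\tilde u_0^2=y^{-1}$, $\tilde u_{0,x}=-\tfrac12 y^{-3/2}y_x$, $\tilde u_{0,xx}=\tfrac34 y^{-5/2}y_x^2-\tfrac12 y^{-3/2}y_{xx}$, while \eqref{tq} expands to $\tilde q=-\tfrac{y_{xx}}{4y}+\tfrac{5y_x^2}{16y^2}$. Inserting these, the contributions proportional to $y^{-2}y_{xx}$ cancel, and so do those proportional to $y^{-3}y_x^2$, giving $E=0$. Combined with the vanishing of the $\lambda_0$-coefficient, this proves \eqref{sta1} for all $\lambda_0\in\mathbb R$. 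The main — indeed only — obstacle is the bookkeeping in this last cancellation; the one genuine idea is to recognise the total-derivative structure above, which both isolates the relevant cancellation and spares us differentiating $\tilde q$. (A direct expansion of $2\tilde u_{0,x}\tilde q+\tilde u_0\tilde q_x-\tfrac12\tilde u_{0,xxx}$ works too, but is more laborious.)
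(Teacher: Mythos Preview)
Your proof is correct. The paper's own argument is simply ``a straightforward computation, after noticing that $\tilde q=-\dfrac{y_{xx}}{4y}+\dfrac{5}{16}\left(\dfrac{y_x}{y}\right)^2$,'' i.e.\ a direct substitution, which is the same approach in spirit. Your total-derivative identity $2u\bigl(2u_xv+uv_x-\tfrac12 u_{xxx}\bigr)=\partial_x\bigl(2u^2v-uu_{xx}+\tfrac12 u_x^2\bigr)$ is a pleasant refinement: it reduces the check to verifying that the algebraic quantity $E$ vanishes, so you never need to compute $\tilde q_x$ or $\tilde u_{0,xxx}$ explicitly, whereas a brute-force expansion of \eqref{sta1} would involve those third-order terms. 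Either route is a direct computation, but yours is the tidier bookkeeping.
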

\begin{proof} It is a straightforward computation, after noticing that $$\tilde q=-\dfrac{y_{xx}}{4y}+\dfrac{5}{16}\left(\dfrac{y_x}{y}\right)^2.$$\end{proof}
We can prove the following 

 \begin{prop}\label{P101} Let $y$ do not depend on $t$. Then the function $\tilde q+\lambda_0 y$, where $\tilde q$ is defined in  \eqref{tq},  is a stationary solution of {\bf every} equation $H_r^q(k,y)$.
 
  Vice-versa, if $q$ does not depend on $t$ and if there exists a solution $y(x)$ of the Riccati-type equation $$q=-\left(\dfrac{y_x}{4y}\right)_x+\left(\dfrac{y_x}{4y}\right)^2+\lambda_0 y$$ for some $\lambda_0\in\mathbb R$, then such a $y$ is a stationary solution of {\bf every} equation $H^y_r(k,q)$.\end{prop}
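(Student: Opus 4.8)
### Proof proposal

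The plan is to establish the two directions of Proposition~\ref{P101} by reducing everything to the single identity \eqref{sta1} of the preceding Lemma, exploiting the recursive structure of the system \eqref{sys}. Throughout I will work with $y$ (resp. $q$) independent of $t$, so that in \eqref{sys} the right-hand sides involving $y_t$ (resp. $q_t$) vanish.

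For the first (direct) statement, fix $r$ and $k$ with $0\le k\le r$, and set $q:=\tilde q+\lambda_0 y$ with $\tilde q$ as in \eqref{tq}. First I would show that the choice $u_0=\tilde u_0=1/\sqrt y$ (independent of $t$) makes the bottom equation of \eqref{sys} hold: indeed that equation reads $2u_{0,x}q+u_0q_x-\tfrac12 u_{0,xxx}=\delta_{(0,k)}q_t$, and since $q_t=0$ (because $y$ and hence $\tilde q$ are $t$-independent) and $q=\tilde q+\lambda_0 y$, this is exactly \eqref{sta1}. Next I would run the recursion upward: the generic line of \eqref{sys}, $2u_{j,x}q-2u_{j-1,x}y+u_jq_x-u_{j-1}y_x-\tfrac12 u_{j,xxx}=0$ (the right side being $0$ since all time derivatives vanish), is a linear third-order ODE in $x$ for $u_j$ once $u_{j-1}$ is known, so it determines $u_j$ recursively (up to integration constants, which we may fix as $t$-independent) for $j=1,\dots,r$; all the $u_j$ are then $t$-independent. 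Finally, the equation $H^q_r(k,y)$ is precisely the $k$-th line from the bottom of \eqref{sys}, whose left-hand side involves only $q,y,u_k,u_{k-1}$ and $x$-derivatives thereof — all $t$-independent — so the right-hand side $q_t$ must vanish, i.e. $q=\tilde q+\lambda_0 y$ is a stationary solution of $H^q_r(k,y)$. Since $r$ and $k$ were arbitrary, this holds for every equation in the hierarchy. (One subtlety: the case $k=0$ and $k=r$ are boundary cases where the ``$k$-th line'' coincides with the bottom or top line; I would note that the argument is unaffected, using the convention $u_{r+1}=0$ where needed.)

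For the converse, suppose $q$ is $t$-independent and $y=y(x)$ solves the Riccati-type equation $q=-\left(\dfrac{y_x}{4y}\right)_x+\left(\dfrac{y_x}{4y}\right)^2+\lambda_0 y$, i.e. $q=\tilde q+\lambda_0 y$ again, now read as a relation determining $q$ from the given $y$. The argument is the mirror image: starting from $u_0=1/\sqrt y$, the Lemma gives that the bottom line of \eqref{sys} holds, the recursion determines $u_1,\dots,u_r$ as $t$-independent functions of $x$ (again solving linear third-order ODEs), and then the equation $H^y_r(k,q)$ — which is the $(k+1)$-th line from the bottom of \eqref{sys}, with the convention $u_{r+1}=0$ — has a $t$-independent left-hand side, forcing $y_t=0$. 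Hence $y$ is a stationary solution of $H^y_r(k,q)$ for every $r,k$.

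The only genuinely non-routine point is the bookkeeping in the recursion: one must check that the system \eqref{sys}, when all time derivatives are set to zero, is consistent — that is, that the $u_j$ produced by solving the lower equations automatically satisfy the equation in which $q_t$ (or $y_t$) sits, given that $q$ (or $y$) is the specific stationary profile $\tilde q+\lambda_0 y$. This is exactly where Lemma's identity \eqref{sta1} does the work: it guarantees the bottom equation closes with $u_0=1/\sqrt y$, and then every higher equation is \emph{defining} $u_j$ rather than imposing a constraint, so no obstruction arises. I would therefore present \eqref{sta1} as the crux and treat the upward recursion as a formal, if slightly tedious, verification, emphasizing that all integration constants can be chosen independent of $t$ precisely because the data $q$, $y$ are.
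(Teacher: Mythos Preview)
Your argument has a genuine gap: the top line of \eqref{sys}, namely $2u_{r,x}y+u_ry_x=0$ (since $y_t=0$), is never verified. In your upward recursion you use the equations at levels $0,1,\dots,r$ to pin down $u_0,\dots,u_r$ (the bottom one via the Lemma, the others as third-order ODEs for $u_j$ given $u_{j-1}$), but the system \eqref{sys} has $r+2$ equations for $r+1$ unknowns $u_0,\dots,u_r$, so one constraint always remains. That remaining constraint is \emph{not} the $k$-th line --- you already folded it into the recursion by setting $q_t=0$ --- but the first line of \eqref{sys}, which forces $u_r=c/\sqrt y$. Nothing in your recursion guarantees that the $u_r$ you produced satisfies this. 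Your sentence ``every higher equation is defining $u_j$ rather than imposing a constraint, so no obstruction arises'' is precisely where the error sits.

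The paper closes this gap in one stroke by taking $u_0=u_1=\dots=u_r=\tilde u_0=1/\sqrt y$ and verifying all $r+2$ equations directly. With this uniform choice and $q=\tilde q+\lambda_0 y$: the bottom line is \eqref{sta1} itself; each intermediate line becomes $2\tilde u_{0,x}(q-y)+\tilde u_0(q-y)_x-\tfrac12\tilde u_{0,xxx}=0$, which is \eqref{sta1} with $\lambda_0$ replaced by $\lambda_0-1$; and the top line $2\tilde u_{0,x}y+\tilde u_0 y_x=0$ is immediate from $\tilde u_0=y^{-1/2}$. Your approach can be repaired by observing that $u_j=\tilde u_0$ is a particular solution at every step of your recursion, so the integration constants can be chosen to force it --- but once you make that observation, the recursion is superfluous and you have simply reproduced the paper's argument.
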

\begin{proof}
Let  the function $y$  vary only with respect to $x$, i.e., $y=y(x)$, and construct the hierarchy with $r=1$ and $k=0$. We have \begin{equation}\label{s1}\begin{cases}2u_{1,x}y+u_1y_x=0\\ \\ 2u_{1,x}q+u_1q_x-2u_{0,x}y-u_0y_x-\dfrac{u_{1,xxx}}{2}=0\\ \\q_t=2u_{0,x}q+u_0q_x-\dfrac{u_{0,xxx}}{2}.\end{cases}\end{equation}
The first equation easily gives $$u_1=\dfrac{1}{\sqrt y},$$ hence $ u_1=\tilde u_0$ (see the above Lemma). This simple fact has two main consequences. The first is that the system is satisfied with the choice of $q=\tilde q+\lambda_0 y$ $(\lambda_0\in\mathbb R)$ and $u_0=\tilde u_0$. Indeed, let $q=\tilde q+\lambda_0 y$; then $q$ does not depend on $t$ and it is easily seen that the  equations in the system are satisfied, by using the above Lemma and the fact that $2\tilde u_{0,x}y+\tilde u_0y_x=0.$
The same result can be obtained for every $r$ and $k\leq r$, by choosing $u_0=u_1=\dots=u_r=\tilde u_0$ and $q=\tilde q+\lambda_0 y$. \end{proof}

  Proposition \ref{P101} provides some important facts.  We list some of them.
\begin{rem} \rm \phantom{a}\begin{itemize}\item If we keep in mind that the functions $q$ and $y$ are the potential and the density respectively of a Sturm-Liouville operator, then Proposition \ref{P101} gives us a fundamental Sturm-Liouville  operator, namely \begin{equation}\label{tL}\tilde L:=\dfrac{1}{y}\left(-D^2+(\tilde q+\lambda_0 y)\right).\end{equation} The general solutions of the associated eigenvalue equation \begin{equation}\label{tL} \tilde E:=-\varphi_{xx}+\left(\tilde q+\lambda_0 y\right)\varphi=\xi y\varphi,\end{equation}
where we have denoted the spectral parameter with $\xi$, are $$\varphi_\pm(x)=y^{-1/4}(x)e^{\pm i\sqrt{\xi-\lambda_0}\mathcal I(x)},$$ where $$\mathcal I(x)=\int_0^x\sqrt{y(s)}ds,$$ and it is tacitly assumed that, if $\xi-\lambda_0<0$, then $\sqrt{\xi-\lambda_0}=i\sqrt{\lambda_0-\xi}.$ 
It follows easily that $\tilde L$ has only absolutely continuous spectrum $\tilde\Sigma_\xi=[\lambda_0,+\infty)$. Moreover, by writing $\tilde E$ as $$\varphi_{xx}+\tilde q\varphi=(\xi-\lambda_0)y\varphi,$$ and setting $\lambda=\xi-\lambda_0$, the operator \begin{equation}\label{tLL} \tilde L_0:=\dfrac{1}{y}\left(-D^2+\tilde q\right)\end{equation} has only absolutely continuous spectrum $\tilde\Sigma_\lambda=[0,+\infty)$. A detailed discussion of these facts will be part of one forthcoming paper \cite{Z4}.

\item A second consequence of  the observation above is an interesting extension of the well-known K-dV equation.  Namely, imagine that we fix a function $y$ which does not depend on $t$, and take into account the functions $\tilde q$ and $\tilde u_0$ as above. Assume further, for simplicity, that $$\lim_{|x|\rightarrow+\infty}y(x)=1.$$ Let us take $k=0$ and write down the first order hierarchy, i.e., for $r=1$. Now we allow $q$ to vary with respect to $t$ as well, and define $$Q(x,t)=q(x,t)-\tilde q(x).$$ The the system \eqref{s1} gives $$\begin{cases}u_1=\tilde u_0=\dfrac{1}{\sqrt y}\\ \\ 2u_{1,x}Q-2u_{0,x}y+u_1Q_x-u_0y_x=0\\ \\q_t=-\dfrac{u_{0,xxx}}{2}+2u_{0,x}q+u_0q_x.\end{cases}$$ A direct computation of $u_0$ from the second equation gives $$u_0=\dfrac{1}{\sqrt y}\left[c+\dfrac{Q}{y}\right]=\dfrac{1}{\sqrt y}\left[c+\dfrac{q-\tilde q}{y}\right].$$ Taking $c=0$ for simplicity and substituting  into the evolution equation for $Q$ we obtain $$q_t=\left(\dfrac{q-\tilde q}{2y\sqrt y}\right)_{xxx}+2q\left(\dfrac{q-\tilde q}{y\sqrt y}\right)_x+q_x\left(\dfrac{q-\tilde q}{y\sqrt y}\right)$$
This equation appears as the generalization of the K-dV equation when the bottom has the variable slope $\tilde q(x)$ (see, for instance, \cite{IS}). The nature of the function $Q$ is very important, but is out of the goal of the present paper. We address the reader to \cite{Z4,Z5} for a detailed discussion.
 \end{itemize}\end{rem}
 \section{The motion of the  Weyl functions} 
 In this section we will determine the time evolution of the Weyl functions, and we will draw some significant relations with the coefficients of the recursion for the hierarchy. We already defined the Weyl $m$-functions $m_\pm(x,\lambda)$ in Section 2. Once that a time evolution has been introduced by means of the zero-curvature relation \eqref{zc1}, we try to determine their evolution equations. First of all, now we have functions $m_\pm(x,t,\lambda)$. They satisfy the Riccati equation $$m_x+m^2=q-\lambda y$$ for every fixed $t\in\mathbb R$ and for every $\lambda$ which is not an eigenvalue of the associated Sturm-Liouville operator. We start with the following 
 
\begin{prop}\label{P2} Let the zero-curvature relation \eqref{zc1} hold. Then the Weyl $m$-functions $m_\pm(x,t,\lambda)$ are defined for every $\lambda\in \mathbb C$, except at the eigevalues of the equation \eqref{E}, and  satisfy the system \begin{equation}\label{mxt} \begin{cases}m_x+m^2=q-\lambda y\\ \\ \eta(\lambda)m_t=-\dfrac{U_{xx}}{2}+U_xm-Um^2+U(q-\lambda y).\end{cases}\end{equation}\end{prop}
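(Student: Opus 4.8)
The first equation of \eqref{mxt} is already recalled in Section 2 (it is the Riccati equation \eqref{ricc2}), so the content of the proposition is the second equation together with the assertion that $m_\pm$ extend to all $\lambda\in\mathbb C$ off the eigenvalues of \eqref{E}. The plan is to exploit the logarithmic-derivative representation of the Weyl solutions. Recall from Section 2 that, for $\Im\lambda\neq 0$, one has $\varphi_\pm(x,t,\lambda)=\exp\left(\int_0^x m_\pm(s,t,\lambda)\,ds\right)$, so that $m_\pm = \varphi_{\pm,x}/\varphi_\pm$, and by Proposition \ref{P1} these $\varphi_\pm$ (being in particular solutions of \eqref{E}) satisfy the second line of \eqref{comp2}, namely $\eta(\lambda)\varphi_{\pm,t}=-\tfrac{U_x}{2}\varphi_\pm+U\varphi_{\pm,x}+c(t,\lambda)\varphi_\pm$. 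The idea is simply to differentiate $m_\pm=\varphi_{\pm,x}/\varphi_\pm$ with respect to $t$ and substitute.

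Concretely, I would write $\eta(\lambda)m_t = \eta(\lambda)\partial_t\!\left(\frac{\varphi_x}{\varphi}\right) = \frac{\eta(\lambda)\varphi_{xt}}{\varphi} - \frac{\varphi_x}{\varphi}\cdot\frac{\eta(\lambda)\varphi_t}{\varphi}$ (suppressing the $\pm$ and assuming, as is standard for these solutions, that $t$- and $x$-derivatives commute). Into the second term I substitute $\eta(\lambda)\varphi_t/\varphi = -\tfrac{U_x}{2}+Um+c(t,\lambda)$ using $\varphi_x/\varphi=m$. For the first term I differentiate the relation $\eta(\lambda)\varphi_t=-\tfrac{U_x}{2}\varphi+U\varphi_x+c\varphi$ with respect to $x$ to get $\eta(\lambda)\varphi_{xt}=-\tfrac{U_{xx}}{2}\varphi-\tfrac{U_x}{2}\varphi_x+U_x\varphi_x+U\varphi_{xx}+c\varphi_x = -\tfrac{U_{xx}}{2}\varphi+\tfrac{U_x}{2}\varphi_x+U\varphi_{xx}+c\varphi_x$, divide by $\varphi$, and use $\varphi_{xx}/\varphi = (q-\lambda y)$ (from \eqref{E}) together with $\varphi_x/\varphi=m$. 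Collecting terms, the $c(t,\lambda)$ contributions cancel — $c\,m$ from the first term against $m\cdot c$ from the second — and what remains is $\eta(\lambda)m_t = \left(-\tfrac{U_{xx}}{2}+\tfrac{U_x}{2}m+U(q-\lambda y)\right) - m\left(-\tfrac{U_x}{2}+Um\right) = -\tfrac{U_{xx}}{2}+U_x m - U m^2 + U(q-\lambda y)$, which is exactly the claimed equation. The cancellation of $c(t,\lambda)$ is the natural reason the evolution equation for $m$ is autonomous in $c$, and is worth remarking on.

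For the extension statement, I would argue as follows: the right-hand side of the $m_t$-equation is, for each fixed $(x,t)$, a rational (indeed polynomial, given Hypotheses \ref{H}) expression in $\lambda$ with coefficients built from $q,y,U$ and their $x$-derivatives, all of which are entire in $\lambda$; combined with the Riccati equation and the holomorphic-extension statement of the Proposition in Section 2 (the Weyl functions extend holomorphically across $\mathbb R\setminus\Sigma^\pm_{\tau_x(a)}$, with poles only at the isolated eigenvalues of $\mathcal L^\pm_{\tau_x(a)}$), one sees that $m_\pm(x,t,\cdot)$ is meromorphic on $\mathbb C$ with possible poles only where $\varphi^\pm_{\tau_x(a)}(0,t,\lambda)=0$, i.e.\ at eigenvalues of the half-line operators; and by Theorem \ref{inv} (equivalently Proposition \ref{P1}) those eigenvalues are exactly the eigenvalues of \eqref{E}, independent of $t$. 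So $m_\pm(x,t,\lambda)$ is defined for all $\lambda\in\mathbb C$ off that ($t$-independent) discrete set.

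\textbf{Main obstacle.} The computation itself is routine; the only genuinely delicate points are (i) justifying the interchange $\varphi_{xt}=\varphi_{tx}$ and the $t$-differentiability of $m_\pm$, which needs the regularity in Hypotheses \ref{H} plus smooth dependence of the Weyl solutions on parameters (standard but worth citing \cite{FJZ,JZ1}), and (ii) making sure the argument is first carried out for $\Im\lambda\neq 0$, where $\varphi_\pm$ genuinely lies in $L^2(\mathbb R^\pm,ydx)$ and $m_\pm$ is unambiguously defined, and only then extended by the meromorphy/analytic-continuation argument to the rest of $\mathbb C$. I would structure the proof to do the $\Im\lambda\neq 0$ calculation first and invoke analytic continuation at the end.
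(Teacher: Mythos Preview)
Your proposal is correct and follows essentially the same route as the paper: define the Weyl solutions $\psi_\pm=\exp\bigl(\int_0^x m_\pm\,ds\bigr)$, invoke Proposition~\ref{P1} to get $\eta(\lambda)\psi_t=-\tfrac{U_x}{2}\psi+U\psi_x+c\psi$, then compute $\eta(\lambda)m_t=\eta(\lambda)\bigl(\psi_{xt}/\psi-\psi_x\psi_t/\psi^2\bigr)$ and simplify; the paper's proof is in fact terser than yours, omitting the intermediate algebra you spell out and the explicit cancellation of $c(t,\lambda)$. One small caution on the extension argument: the poles of $m_\pm$ sit at eigenvalues of the \emph{half-line} operators $\mathcal L^\pm_{\tau_x(a)}$ (see the Proposition in Section~2), which are not literally the same objects as the full-line eigenvalues governed by Theorem~\ref{inv}; the paper itself is loose on this point, so your argument is no worse, but you should not conflate the two.
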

\begin{proof}
 Suppose that the zero-curvature relation \eqref{zc1}  holds. Then Proposition \ref{P1} gives us the time evolution of every solution of the eigenvalue equation \eqref{E}, i.e., the second equation in \eqref{comp2}. Let $\lambda\in\mathbb C$ be not an eigenvalue. Then the functions $$\psi_\pm(x,t,\lambda)=\exp\left(\int_0^x m_\pm(s,t,\lambda)ds\right)$$ are solutions of the equation \eqref{E} such that $$\dfrac{\psi_{\pm,x}(x,t,\lambda)}{\psi_\pm(x,t,\lambda)}=m_\pm(x,t,\lambda).$$ This follows easily from the Riccati equation for $m_\pm$.
 
Since the solutions $\psi_\pm$ satisfy the second equation in \eqref{comp1}, i.e., 
 $$\eta(\lambda)\varphi_t=-\dfrac{U_x}{2}\varphi+U\varphi_x+c(t,\lambda)\varphi,$$ we have, omitting the subscript $\pm$ for convenience, $$\eta(\lambda)m_t=\eta(\lambda)\left(\dfrac{\psi_{xt}}{\psi}-\dfrac{\psi_x\psi_t}{\psi^2}\right).$$ Using the above relation we obtain \begin{equation}\label{mt}\eta(\lambda)m_t=-\dfrac{U_{xx}}{2}+U_xm-Um^2+U(q-\lambda y).\end{equation}
 
 The equation \eqref{mt} gives the time evolution of the Weyl $m$-functions when the zero-curvature \eqref{zc1} holds.\end{proof}

 Now, assume that $\lambda=0$ is not in the spectrum of \eqref{sl} for every $t\in\mathbb R$.  (if this is not the case, we can argue by a translation argument). Then there are well-defined function $\tilde m_{\pm}(x,t):=m_\pm(x,t,0).$ They satisfy the system \eqref{mxt} for $\lambda=0$, which reads as $$\begin{cases}\tilde m_x+\tilde m^2=q\\ \\ \eta(0)\tilde m_t=-\dfrac{u_{0,xx}}{2}+u_{0,x}\tilde m-u_0\tilde m^2 +u_0q.\end{cases}$$ It is fairly clear that both $\tilde m_\pm$ do not depend on $y$.
 
 We now distinguish two cases: $(i)\;\; \eta(0)\neq 0$ and $(ii)\,\;\eta(0)=0.$ In the case $(i)$, then we can argue that $\tilde m_\pm$ satisfy \begin{equation}\label{mi} \eta(0)\tilde m_t=-\dfrac{u_{0,xx}}{2}+(u_0\tilde m)_x, \end{equation} while in the second case we have \begin{equation}\label{mii} 0=-\dfrac{u_{0,xx}}{2}+(u_0\tilde m)_x.\end{equation}
 Both the above equations express $u_0$ as functions of  $\tilde m_\pm$ and vice-versa.  In the case $(i)$ we have to solve a  PDE, while in $(ii)$ we have a linear second order ODE. Let us focus for a while on the second case. By integration we get $$\dfrac{u_{0,x}}{2}=u_0\tilde m_\pm+\delta_\pm(t),$$ where $\delta_\pm(t)$ are constants of integration which, in general,  must differ from 0. A direct further integration gives $$u_0=\psi_\pm^2(x,t,0)\left(\rho_\pm(t)+\int_{\mp \infty}^x\dfrac{\delta_\pm(t)}{\psi^2_\pm(s,t,0)}ds\right),$$ where $\rho_\pm(t)$ are again constant of integration. By forcing $u_0$ to be bounded on all $\mathbb R$, we simply have $$u_0=\psi_+^2(x,t,0)\int_{-\infty}^x\dfrac{\delta_+(t)}{\psi^2_+(s,t,0)}ds=-\psi_-^2(x,t,0)\int_{x}^{+\infty}\dfrac{\delta_-(t)}{\psi^2_-(s,t,0)}ds,$$ which gives an insight on the decay rate of $\psi_\pm$ as $|x|\rightarrow\pm\infty$ (and of course justifies the fact that $\delta_\pm(t)\neq 0$).
 
 What is perhaps more important is what we call the \it Weyl difference \rm $${\mathcal M_0}(x,t):=\tilde m_-(x,t)-\tilde m_+(x,t).$$
 In the case $(i)$ we have 
 \begin{equation}\label{d1}\eta(0){\mathcal M_0}_t=(u_0{\mathcal M_0})_x,\end{equation}
while in the case $(ii)$ we simply get \begin{equation}\label{d2} (u_0{\mathcal M_0})_x=0.\end{equation}
Again, $u_0$ can be directly obtained from ${\mathcal M_0}$ and vice-versa, and in fact \begin{equation}\label{d3}\begin{cases} u_0(x,t)=\dfrac{1}{{\mathcal M_0}(x,t)}\displaystyle{\left(\delta(t)+\eta(0)\int_0^x{\mathcal M_0}_t(s,t)ds\right)},&\eta(0)\neq 0\\ \\u_0(x,t)=\dfrac{\delta(t)}{{\mathcal M_0}(x,t)},&\eta(0)=0,\end{cases}\end{equation} where, again, $\delta(t)$ is  a constant of integration. 

\bigskip

Another interesting fact concerning the Weyl difference $\mathcal M_0$ is that it defines uniquely the potential $q$. Indeed, let $$\mathcal N_0:=\tilde m_-(x,t)+\tilde m_+(x,t) .$$ The first equation in \eqref{mxt} gives, for $\lambda=0$, $$\begin{cases}\mathcal N_0=-\dfrac{\mathcal M_{0,x}}{\mathcal M_0}\\ \\ \mathcal N_{0,x}+\dfrac{1}{2}\left(\mathcal M_0^2+\mathcal N_0^2\right)=2q,\end{cases}$$ and it follows that 
\begin{equation}\label{mq}-\dfrac{\mathcal M_{0,xx}}{\mathcal M_0}+\dfrac{3}{2}\left(\dfrac{\mathcal M_{0,x}}{\mathcal M_0}\right)^2+\dfrac{\mathcal M_0^2}{2}=2q,\end{equation} which expresses, as anticipated above, $q$ as a function of $\mathcal M_0$. 

A  similar reasoning can be made for the general function $$\mathcal M(x,t,\lambda):=m_-(x,t,\lambda)-m_+(x,t,\lambda),$$ obtaining the following fundamental
\begin{thm}\label{P102} Let $\mathcal M(x,t,\lambda)=m_-(x,t,\lambda)-m_+(x,t,\lambda)$. Then for every $\lambda$ which is not an eigenvalue of \eqref{sl}, we have  \begin{equation}\label{mm}\begin{cases}\eta(\lambda)\mathcal M_t=\left(U\mathcal M\right)_x\\ \\ -\dfrac{\mathcal M_{xx}}{\mathcal M}+\dfrac{3}{2}\left(\dfrac{\mathcal M_{x}}{\mathcal M}\right)^2+\dfrac{\mathcal M^2}{2}=2(q-\lambda y).\end{cases}\end{equation}
\end{thm}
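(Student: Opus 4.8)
The plan is to imitate, for general $\lambda$, the computation already carried out for $\lambda=0$. We have the two Weyl $m$-functions $m_\pm(x,t,\lambda)$, which by Proposition \ref{P2} both satisfy the system \eqref{mxt}. So first I would write down the second equation of \eqref{mxt} for $m_+$ and for $m_-$ separately and subtract: since the terms $-U_{xx}/2$ and $U(q-\lambda y)$ do not involve $m$, they cancel, leaving
\[
\eta(\lambda)\mathcal M_t = U_x(m_--m_+) - U(m_-^2-m_+^2) = U_x\mathcal M - U(m_-+m_+)\mathcal M .
\]
To turn the right-hand side into $(U\mathcal M)_x = U_x\mathcal M + U\mathcal M_x$, I need $\mathcal M_x = -(m_-+m_+)\mathcal M$. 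This is exactly the content of the first line of the auxiliary system obtained by subtracting the two Riccati equations: from $m_{\pm,x}+m_\pm^2 = q-\lambda y$ one gets $(m_--m_+)_x = -(m_-^2-m_+^2) = -(m_-+m_+)(m_--m_+)$, i.e. $\mathcal M_x/\mathcal M = -(m_-+m_+)$, provided $\mathcal M\neq 0$ (which holds off the spectrum, since there $m_-\neq m_+$). Substituting gives $\eta(\lambda)\mathcal M_t = U_x\mathcal M + U\mathcal M_x = (U\mathcal M)_x$, which is the first equation of \eqref{mm}.

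For the second equation I would set $\mathcal N := m_-+m_+$ in analogy with $\mathcal N_0$, so that the two relations just derived read $\mathcal N = -\mathcal M_x/\mathcal M$ and, by adding the two Riccati equations, $\mathcal N_x + \tfrac12(\mathcal M^2+\mathcal N^2) = 2(q-\lambda y)$ — here one uses the identity $m_-^2+m_+^2 = \tfrac12\big((m_--m_+)^2+(m_-+m_+)^2\big) = \tfrac12(\mathcal M^2+\mathcal N^2)$. Now eliminate $\mathcal N$: from $\mathcal N = -\mathcal M_x/\mathcal M$ we get $\mathcal N_x = -\mathcal M_{xx}/\mathcal M + (\mathcal M_x/\mathcal M)^2$ and $\mathcal N^2 = (\mathcal M_x/\mathcal M)^2$, so
\[
-\frac{\mathcal M_{xx}}{\mathcal M} + \left(\frac{\mathcal M_x}{\mathcal M}\right)^2 + \frac12\mathcal M^2 + \frac12\left(\frac{\mathcal M_x}{\mathcal M}\right)^2 = 2(q-\lambda y),
\]
which is precisely $-\mathcal M_{xx}/\mathcal M + \tfrac32(\mathcal M_x/\mathcal M)^2 + \tfrac12\mathcal M^2 = 2(q-\lambda y)$.

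The computation is entirely routine; the only points requiring a word of care are the validity of the intermediate manipulations. One must note that for $\lambda$ outside the spectrum of the operator \eqref{sl} the functions $m_\pm(x,t,\lambda)$ are genuine (finite, holomorphic) functions with $m_-\neq m_+$, so that $\mathcal M$ is a nonvanishing function and division by $\mathcal M$, $\mathcal M^2$ is legitimate; this is exactly the set excluded in the statement. I also need that $m_\pm$ are differentiable in $t$ — which is guaranteed by Proposition \ref{P2} and the smoothness hypotheses in Hypothesis \ref{H}, since $m_\pm = \psi_{\pm,x}/\psi_\pm$ with $\psi_\pm$ differentiable in $t$ by \eqref{comp2}. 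Thus the main (and only mild) obstacle is bookkeeping: keeping the $\pm$ subscripts straight and invoking the off-spectrum hypothesis at the one place where one divides by $\mathcal M$. No genuinely new idea beyond the $\lambda=0$ argument is needed.
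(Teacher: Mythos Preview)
Your proof is correct and follows essentially the same route as the paper. The paper does not write out a separate proof of Theorem \ref{P102}; it simply remarks that ``a similar reasoning can be made for the general function $\mathcal M(x,t,\lambda)$'' as was done for $\mathcal M_0$ at $\lambda=0$, and your proposal is exactly that similar reasoning --- subtracting and adding the two copies of the system \eqref{mxt}, introducing $\mathcal N=m_-+m_+$, and eliminating $\mathcal N$ via the identity $\mathcal N=-\mathcal M_x/\mathcal M$.
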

These results can give an idea of at least two alternative different ways  to interpret/define the hierarchies. We discuss in the next line one of these ways, and describe the second one, which is valid for pairs $(q,y)\in\mathcal E_2$ which satisfy an additional assumption, in the next Section.

Using $\mathcal M_0$, whatever the value $\eta(0)$, $u_0$ is directly determined by the evolution of $\mathcal M_0$, and so is $q$. This means that by choosing a function $\mathcal M_0(x,t)$ which is bounded and of class $C^{2r+3}(\mathbb R)\times C^1(\mathbb R)$, we have  well defined $q(x,t)$ and $u_0(x,t)$ which satisfy at once one equation (the last one) in the system \eqref{sys}. So only one equation remains as the compatibility condition for the zero-curvature to hold, namely the equation $H^y_r(k,q)$. As a simple example, by taking $k=r$ and $\mathcal M_0=2$, we have $q(x,t)=1$, and obtain the Camassa-Holm equation of order $r$, namely $H^y_r(r,1)$. However, if we choose to fix $y$, then things become more subtle. We explain this with an example.
\begin{ex1}\rm  Fix $\eta=1$, $r=1$, $k=0$ and $y=1$. Then the corresponding compatibility condition is the well-known K-dV equation. Now, we can always choose to fix a function $\mathcal M_0$ and define $q$ as in \eqref{mq}, in such a way that the last equation in \eqref{sys} is satisfied. Note that this equation was exactly the compatibility condition which we considered before. Where did the compatibility condition go? Well, it moved  in the second equation of \eqref{sys} which states that $u_0$ must be equal to $q/2$, and hence that the quantity in the right-hand  side of the first equation in \eqref{d3} equals $q/2$. We thus obtain an evolution equation for $\mathcal M_0$, namely \begin{equation}\label{mK}\mathcal M_{0,t}=\dfrac{1}{4}\left(-\mathcal M_{0,xx}+\dfrac{3}{2}\dfrac{(\mathcal M_{0,x})^2}{\mathcal M_0}+\dfrac{\mathcal M_0^3}{2}\right)_x.\end{equation}

The equation \eqref{mK} gives the evolution equation for $\mathcal M_0$ in the case that $q$ satisfies the K-dV equation, and vice-versa, for  a given (sufficiently regular) solution of the equation \eqref{mK}, the function $q(x,t)$ defined via \eqref{mq} satisfies the K-dV equation.  The equation  \eqref{mK} is equivalent to the K-dV equation, although it appears to be more difficult to study. However, the simplicity of the procedure in the case when $q=1$ is only apparent, because there is a quite involved relation which is to be used to determine the function $y(x,t)$ (see \cite{JZ1,JZ2,JZ5,JZ6}).
\end{ex1}

\section{The hierarchy for certain classes of pairs $(q,y)\in\mathcal E_2$}

We discuss a method to define the hierarchy by means of the expansion of the Weyl functions in a neighborhood of the point $z=\infty$ on the Riemann sphere.
First, we state some properties of the Weyl $m$-functions. To simplify the notation, let us first consider the functions $m_\pm(\lambda):=m_\pm(0,\lambda)$. These functions are holomorphic in the union $\mathbb C^+\cup\mathbb C^-$, and moreover $$sgn\;\dfrac{\Im m_\pm(\lambda)}{\Im\lambda}=\pm 1.$$ It follows easily that both $m_+(\lambda)$ and $-m_-(\lambda)$ are Herglotz functions. For information concerning Herglotz functions, in particular in relation to the Weyl $m$-functions, see \cite{Kot}. It is however important to notice that both $m_\pm(\lambda)$ admits non-tangential limits $$\lim_{\ep\rightarrow0}m_\pm(\lambda+i\ep)=:m_\pm(\lambda+i0),$$ for a.a. $\lambda\in\mathbb R$. We address the reader to \cite{Kot} for more information. Now, we can define a function in the union $\mathbb C^+\cup\mathbb C^-$ as follows: $$h(\lambda)=\begin{cases}m_+(\lambda),&\Im\lambda>0\\m_-(\lambda),&\Im\lambda<0.\end{cases}$$ This function is holomorphic in $\mathbb C^+\cup\mathbb C^-$,  and in fact in $\mathbb C\setminus \Sigma_a$. By using the Bebutov flow, we can define functions $m_\pm(x,\lambda)$, having the same properties as above, for $x\in\mathbb R$. The assumption we make here is about the possibility of expanding $h(x,\lambda)$ in a real half-line.
To do this, we need a fundamental assumption.
\begin{hy1}\label{H2} Consider the operator $\mathcal L_a=\dfrac{1}{y}[-D^2+q]$, acting on $L^2(\mathbb R,ydx)$. Define $$h(x,\lambda)=\begin{cases}m_+(x,\lambda),&\Im\lambda>0\\m_-(x,\lambda),&\Im\lambda<0.\end{cases}$$ Let $z^2=-\lambda$ be a local parameter near $\infty$ in the Riemann sphere $\mathcal R$. We assume that there exists $N\in\mathbb N\cup\{\infty\}$ such that, in a neighborhood of $z=\infty$, the function $h(x,z)$ admits an expansion \begin{equation}\label{EE} h(x,z)=a_1(x)z+a_0(x)+\sum_{j=1}^{N}a_{-j}(x)z^{-j}+\mathcal O(z^{-N-1}).\end{equation}
\end{hy1}

\begin{defin} The set $\mathcal P_N$ denotes the subset of $\mathcal E_2$ consisting of the pairs $a=(q,y)\in\mathcal E_2$ satisfying Hypotheses \ref{H1} and  \ref{H2}.\end{defin}

We will provide in the following some families of pairs $(q,y)$ which satisfy Hypothesis \ref{H2}.  

It is important to notice that  Hypothesis \ref{H2} is a real restriction on the admissible pairs in $\mathcal E_2$, and in fact it implies immediately that \begin{equation}\label{expn1}\begin{cases}m_+(x,z)=a_1(x)z+a_0(x)+\displaystyle{\sum_{j=1}^N}a_{-j}(x)z^{-j}+\mathcal O(z^{-N-1})\\ \\
m_-(x,z)=-a_1(x)z+a_0(x)+\displaystyle{\sum_{j=1}^N}(-1)^ja_{-j}(x)z^{-j}+\mathcal O(z^{-N-1}),\end{cases}\end{equation} i.e., a precise indication concerning the behavior of the Weyl $m$-functions. 

\bigskip

 Introducing a time dependence, and assuming that the zero-curvature relation \eqref{zc1} is valid, we can obtain the same expansions, where the coefficients depend on $x$ and $t$. Using the Riccati equation for $m_\pm$ we have \begin{equation}\label{coeff}\begin{cases}a_1=-\sqrt y\\ \\a_0=\dfrac{-y_x}{4y}\\ \\ a_{0,x}+a_0^2+2a_1a_{-1}=q \\ \\  a_{-j}=\dfrac{-1}{2a_1}\left[a_{-j+1,x}+\displaystyle{\sum_{k=-j+1}^0}a_ka_{-j+1-k}\right],& 2\leq j\leq N\end{cases}\end{equation} Note that $a_1$ and $a_{-1}$ determine \it all \rm the other coefficients. One can think at the above relations as $a_1$ which defines $y$ and $a_{-1}$ which defines $q$ via the relation \begin{equation}\label{defq}q=a_{0,x}+a_0^2-2\sqrt ya_{-1}=\tilde q+2\sqrt ya_{-1},\end{equation} where $\tilde q$ is as in \eqref{tq}. All the other coefficients are determined via the remaining relations in \eqref{coeff}. Now, consider the function $\mathcal M(x,t),$ as above. Then the first equation in \eqref{mm} is valid, i.e., $$\eta(\lambda)\mathcal M_t=(U\mathcal M)_x.$$ The expansion of $\mathcal M$ is given by \begin{equation}\label{exp3}\mathcal M=-2a_1z-2\sum_{j=0}^{N_1} a_{-(2j+1)}z^{-(2j+1)}+O(z^{-N_1}),\end{equation} where $N_1=\displaystyle{\left\lfloor{\dfrac{N-1}{2}}\right\rfloor},$ and $\left\lfloor{\cdot}\right\rfloor$ denotes the integer part. Now, let us choose arbitrarily two coefficients $a_1$ and $a_{-1}$ of the expansion for $\mathcal M$. Note that, as argued above, $a_1$ defines a function $y$, while $a_{-1}$ defines $q$. Assume that Hypotheses \ref{H} is valid. All the coefficients of the expansion of $\mathcal M$ are determined by means of \eqref{coeff}. 

We try to determine the polynomial $U$ such that the evolution $\mathcal M_t=(U\mathcal M)_x$ holds. Inserting the expression for $U$ and the expansion of $\mathcal M$ up to the polynomial part in the evolution equation we obtain \begin{equation}\label{exp2} \begin{split}&(-1)^{k}z^{2k}\left(a_{1,t}z+\sum_{j=0}^{N_1} a_{-(2j+1),t}\cdot z^{-(2j+1)}\right)=\\&\left[\left(\sum_{i=0}^r(-1)^iu_{i}z^{2i}\right)\left(a_{1}z+\sum_{j=0}^{N_1} a_{-(2j+1)}\cdot z^{(-2j+1)}\right)\right]_x\end{split}\end{equation} The expression \eqref{exp2} provides a certain number of relations, ordered by the decreasing power of $z$. In order to completely determine the coefficients of the polynomial $U$ we need exactly $r+1$ relations, while another condition is necessary in order for all the other relation to be satisfied. Such an additional relation is exactly the compatibility condition we are looking for, and is in fact an equivalent expression of the evolution equation we are interested in. This reasoning implies that, once we have fixed a number $r\geq 1$, then at least we need that $$N_1=\displaystyle{\left\lfloor{\dfrac{N-1}{2}}\right\rfloor}\geq r, $$ or, equivalently $$N\geq 2r+1.$$ All the conditions can be expressed in terms of $a_1$, $a_{-1}$ and their derivatives, and hence in terms of $q$, $y$ and their derivatives. We address the reader to  future publications \cite{Z4,Z5} for the details on the general case, and here, mostly for notational convenience, we only furnish the example of the standard K-dV equation.
For, we have to consider the eigenvalue equation $-\varphi_{xx}+q\varphi=\lambda \varphi,$ so that   $a_1=-1$, $a_0=0$ and $a_{-1}=-\dfrac{q}{2}.$ This fact is equivalent to choose $a_1=-1$ and $a_{-1}=-\dfrac{q}{2}.$ We take $k=0$ and  $U=u_1\lambda+u_0$, so that $N_1\geq 1$, or $N\geq 3$ (that is, we need at least three terms in the polynomial expansions \eqref{expn1}). We write explicitly the relations in \eqref{exp2}:
\begin{equation}\label{REC}\begin{cases} -[u_1a_1]_x=0\\ \\ [-u_1a_{-1}+u_0a_{1}]_x=0\\ \\  a_{-1,t}=[-u_1a_{-3}+u_0a_{-1}]_x\\ a_{-(2j+1),t}=[-u_1a_{-(2j+3)}+u_0a_{-(2j+1)}]_x,&j\geq 1\end{cases}.\end{equation}
The first equation determines $u_1=1$, the second one gives $u_0=\dfrac{q}{2}$ and the third one is the compatibility condition for which all the other equations in the system are satisfied, namely we have $$-\dfrac{q_t}{2}=\left[-\dfrac{q^2}{4}-a_{-3}\right]_x.$$ But using \eqref{coeff} we have $$a_{-3}=\dfrac{1}{8}\left(q^2-q_{xx}\right)$$ and a direct substitution gives $$\dfrac{q_t}{2}=\left[\dfrac{q^2}{4}+\dfrac{1}{8}(q^2-q_{xx})\right]_x=-\dfrac{q_{xxx}}{8}+\dfrac{3}{4}qq_x,$$ that is, the K-dV equation! By construction, \it all \rm the other relations are satisfied. We test directly the first of them: $$a_{-3,t}=\left[-a_{-5}-\dfrac{q}{2}a_{-3}\right]_x.$$ We use \eqref{coeff} to obtain $$a_{-5}=\dfrac{1}{16}\left[\dfrac{5}{2}(q_x)^2+3qq_{xx}-\dfrac{1}{2}q_{xxxx}-q^3\right],$$
$$a_{-3,t}=\dfrac{1}{8}\left[\left(-\dfrac{q_{xxx}}{4}+\dfrac{3}{2}qq_x\right)_{xx}-2q\left(-\dfrac{q_{xxx}}{4}+\dfrac{3}{2}qq_x\right)\right].$$ Again, a direct substitution gives the desired relation. The others can be proved by induction, as stated in the  following
\begin{thm}\label{T101} Let us consider the case of the K-dV equation, i.e., the case when $y(x,t)\equiv 1$, $\eta(\lambda)\equiv 1$ and $r=1$. Assume that Hypotheses \ref{H1}  and \ref{H2} hold. Then for every $j=1,\dots,N$ we have $$a_{-j,t}=[-a_{-j-2}-a_{-1}a_{-j}]_x.$$\end{thm}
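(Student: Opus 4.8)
The plan is to prove the identity
$$a_{-j,t}=\bigl[-a_{-j-2}-a_{-1}a_{-j}\bigr]_x,\qquad j=1,\dots,N,$$
by induction on $j$, using the recursion \eqref{coeff} together with the time-evolution equation for $\mathcal M$ coming from Theorem \ref{P102}. The base cases $j=1$ and $j=3$ have already been verified by the explicit computations preceding the statement (with $u_1=1$, $u_0=q/2=-a_{-1}$), so I would state that the induction is anchored there and concentrate on the inductive step. Throughout, recall that in the K-dV situation $a_1=-1$, $a_0=0$, $\eta(\lambda)\equiv 1$, and $U=u_1\lambda+u_0=\lambda-a_{-1}$, so the system \eqref{REC} collapses to the single family of relations
$$a_{-(2j+1),t}=\bigl[-a_{-(2j+3)}-a_{-1}a_{-(2j+1)}\bigr]_x,\qquad j\geq 1,$$
for the odd-indexed coefficients; one must also show that the even-indexed coefficients satisfy the same pattern, which is where the Riccati recursion \eqref{coeff} must be invoked directly.

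The core of the argument is to combine the two pieces of structure that $\mathcal M$ carries simultaneously. On one hand, \eqref{mm} gives $\mathcal M_t=(U\mathcal M)_x$, which, after inserting the expansions \eqref{exp3} of $\mathcal M$ and matching powers of $z$ as in \eqref{exp2}, yields the evolution of every odd coefficient $a_{-(2j+1)}$ in the stated form. On the other hand, the algebraic recursion in \eqref{coeff},
$$a_{-j}=-\tfrac12\Bigl[a_{-j+1,x}+\textstyle\sum_{k=-j+1}^{0}a_k a_{-j+1-k}\Bigr]\quad(a_1=-1,\ a_0=0),$$
lets me differentiate in $t$ and relate $a_{-j,t}$ to $a_{-j+1,t}$, $a_{-j+1,xt}$, and lower coefficients. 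The strategy: assume the desired formula holds for all indices up to $j$ (or, more precisely, for all indices with absolute value $\le j$), write $a_{-j-1}$ via the recursion, differentiate in $t$, substitute the inductive hypothesis for $a_{-j,t}$ and for the cross-terms $a_{-k,t}$ appearing in the sum, and then simplify using the $x$-recursion again to reorganize the result into $[-a_{-j-3}-a_{-1}a_{-j-1}]_x$. The identity $q=a_{0,x}+a_0^2-2\sqrt y a_{-1}=-2a_{-1}$ (since $\tilde q=0$ when $y\equiv1$) is used to identify $u_0=-a_{-1}$ consistently.

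The main obstacle I anticipate is the bookkeeping in the inductive step: after differentiating the recursion in $t$ and substituting the inductive hypothesis, one obtains an expression involving $a_{-1,t}$, several products $a_{-1}a_{-k}$ and their $x$-derivatives, and a term $a_{-j,xt}$; collapsing all of this back into the compact form $[-a_{-j-3}-a_{-1}a_{-j-1}]_x$ requires repeatedly re-applying the $x$-recursion \eqref{coeff} to trade $a_{-j-3}$ and $a_{-j-1,x}$ for lower-order combinations, and keeping careful track of the telescoping in the convolution sums $\sum_k a_k a_{-j+1-k}$. A clean way to organize this is to treat the \emph{generating function} $h(x,t,z)=\sum a_{-j}(x,t)z^{-j}$ (equivalently $m_+$) directly: the Riccati equation $h_x+h^2=q-\lambda y=q+z^2$ and the evolution equation $h_t=-\tfrac{U_{xx}}{2}+U_x h-Uh^2+U(q-\lambda y)$ from \eqref{mxt} together imply, after using $h^2=q+z^2-h_x$, that $h_t=\bigl(U_x h - \tfrac{U_x}{2}\cdot\text{(lower)}+\dots\bigr)$ can be rewritten as $h_t=\partial_x(Uh)-\tfrac12 U_{xx}+\text{terms that vanish}$, i.e. $h_t=\partial_x(Uh)+(\text{something independent of the tail})$; comparing $z^{-j}$ coefficients then yields the claim for all $j$ at once, with the even/odd distinction handled automatically. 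I would present the induction as the primary route but remark that this generating-function computation gives a quick cross-check, and verify that the two lowest-order relations in \eqref{REC} (which fix $u_1=1$, $u_0=q/2$) are exactly what makes the remainder terms in this rewriting cancel.
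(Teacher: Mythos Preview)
Your primary inductive approach is essentially the paper's own: differentiate the recursion \eqref{coeff} in $t$, substitute the inductive hypothesis $\dot a_{-k}=[-a_{-k-2}-a_{-1}a_{-k}]'$ for each term appearing, and collapse the result back to $-a_{-j-3}'-(a_{-1}a_{-j-1})'$ by reapplying the $x$-recursion. The paper carries out precisely this bookkeeping, checking $j=1,2,3$ directly and then doing the step $j\to j+1$.

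Your generating-function remark, however, is a genuinely shorter alternative that the paper does not pursue. From \eqref{mxt} combined with the Riccati equation one has \emph{exactly} $m_t=(Um)_x-\tfrac12 U_{xx}$ (no remainder terms); inserting $U=-z^2-a_{-1}$ and $m_+=-z+\sum_{j\ge1}a_{-j}z^{-j}$, the $z^{-j}$-coefficient gives the claimed identity for every $j\ge1$ in one stroke, the $z^0$-coefficient reduces to $a_{-2}=\tfrac12 a_{-1,x}$ (immediate from \eqref{coeff}), and the positive-power coefficients are automatically satisfied by $u_1=1$, $u_0=-a_{-1}$. This bypasses the induction entirely and treats odd and even $j$ uniformly, whereas the paper's argument trades more algebraic labor for staying purely at the level of the individual coefficients. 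If you present the proof, the generating-function route is the cleaner one to write up; the induction can be mentioned as the coefficient-by-coefficient unpacking of the same identity.
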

\begin{proof} First of all, notice that, in this case, we have $u_1=1$ and $u_0=-a_{-1}$. By induction,
assume that all the relations in \eqref{REC}  are satisfied up to a certain order $j$, for every coefficient of the expansion \eqref{expn1}, i.e.,  $$\dot{a_{-j}}=[-a_{-j-2}-a_{-1}a_{-j}]',\;\;j\geq 1$$ (we are using a more convenient notation for the derivatives with respect to $t$ and $x$).  We can directly check the relations for $j=1$, $j=2$ and $j=3$, as above. Next, we compute the time-derivative of the coefficient $a_{-j-1}$ by using \eqref{coeff}, as follows:  $$\dot{a_{-j-1}}=\dfrac{1}{2}\left[\dot{a_{-j}}'+\sum_{k=-j}^0\dot{a_{k}}a_{-j-k}+\sum_{k=-j}^0a_k\dot{a_{-j-k}}\right]=$$ $$=\dfrac{1}{2}\left[-a_{-j-2}''-(a_{-1}a_{-j})''+\sum_{k=-j}^0a_{-j-k}(-a_{k-2})'+\sum_{k=-j}^0a_k(-a_{-j-k-2})'\right]+$$ $$\dfrac{1}{2}\left[\sum_{k=-j}^0-a_{-j-k}(a_{-1}a_k)'+\sum_{k=-j}^0-a_k(a_{-1}a_{-j-k})'+a_{-1}''a_{-j}-(a_{-1}a_{-j})''\right].$$ Now, since $$2a_{j-3}'=[a_{-j-2}'+\sum_{k=-j-2}^0a_ka_{-j-2-k}]',$$ one can see that $$\dot{a_{-j-1}}=-a_{-j-3}'+\dfrac{1}{2}\Big[-2a_{-1}'a_{-j-1}+a_{-1}''a_{-j}+$$ $$+\sum_{k=-j}^0a_{-j-k}(a_{-1}a_k)'+\sum_{k=-j}^0a_k(a_{-1}a_{-j-k})'-(a_{-1}a_{-j})''\Big].$$ It follows, looking at the relations in \eqref{exp2}, that the proof follows if
$-(a_{-1}a_{-j-1})'$ equals the part into the square parenthesis in the above formula. Computing directly such a part, we have $$\dfrac{1}{2}\Big[-a_{-1}'(2a_{-j-1}+2(-2a_{-j-1}-a_{-j}'))-a_{-1}(-a_{-j}''-2a_{-j-1}')-$$ $$-(a_{-1}a_{-j})''+a_{-1}''a_{-j}\Big]=-(a_{-1}a_{-j-1})',$$ as desired. 
\end{proof}

We finish this Section by underlying an important consequence of the reasoning made up to this point. One can choose \it a priori \rm an asymptotic expansion of the form \eqref{exp3} of a function $\mathcal M(x,t,z)$ near $z=\infty$ in the Riemann surface of $w^2=-\lambda$ by assigning $a_1$ and $a_{-1}$, then defining all the other coefficients as in \eqref{coeff}. The coefficients $a_1$ and $a_{-1}$ in turn define the density $y(\cdot,t)$ and the potential $q(\cdot,t)$ respectively of a Sturm-Liouville operator $$\dfrac{1}{y(\cdot,t)}\left[-D^2+q(\cdot,t)\right].$$ We have the following Theorem, giving the situation in the general case

\begin{thm}\label{T102} The validity of the zero-curvature relation, under the conditions expressed by Hypotheses \ref{H1} and \ref{H2}, is  equivalent to a compatibility condition between $a_1$, $a_{-1}$ and the coefficients of the polynomial $U$, and is given by equating the coefficients of the power $z^{-1}$ in \eqref{exp2}, i.e., \begin{equation}\label{fin1} (-1)^k\cdot a_{-(2k+1),t}=\left[\sum_{j=0}^r(-1)^ju_ja_{-(2j+1)}\right]_x,\end{equation} where the coefficients $a_j$ are given as in \eqref{coeff}, i.e.,  $$a_1=-\sqrt y, \;a_{-1}=\dfrac{\tilde q-q}{2\sqrt y},\; \tilde q=-\left(\dfrac{y_x}{4y}\right)_x+\left(\dfrac{y_x}{4y}\right)^2,$$ $$a_{-j}=\dfrac{-1}{2a_1}\left[a_{-j+1,x}+\displaystyle{\sum_{k=-j+1}^0}a_ka_{-j+1-k}\right],\;\;2\leq j\leq N,$$ under the condition that $N\geq 2r+1$, or, equivalently, $N_1:=\displaystyle{\left\lfloor\dfrac{N-1}{2}\right\rfloor\geq r}.$
\end{thm}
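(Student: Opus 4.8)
The plan is to reduce Theorem \ref{T102} to the already-established machinery of Proposition \ref{P2} (the motion of the Weyl functions), Theorem \ref{P102} (the evolution equation $\eta(\lambda)\mathcal M_t=(U\mathcal M)_x$ together with the Riccati-type identity linking $\mathcal M$ and $q-\lambda y$), and the coefficient recursion \eqref{coeff}. Concretely, one starts from the equivalence: under Hypotheses \ref{H1} and \ref{H2}, the zero-curvature relation \eqref{zc1} holds if and only if Proposition \ref{P1} applies, i.e. if and only if the Weyl $m$-functions satisfy the second equation of \eqref{mxt}, which by subtraction gives the first equation of \eqref{mm}, namely $\eta(\lambda)\mathcal M_t=(U\mathcal M)_x$. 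So the theorem amounts to showing that this single functional identity, when both sides are expanded in the local parameter $z$ near $z=\infty$, is equivalent to the single scalar relation \eqref{fin1}, granted $N\ge 2r+1$.

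The first step is to substitute the expansions: $\eta(\lambda)=\lambda^k=(-1)^kz^{2k}$, the polynomial $U=\sum_{i=0}^r u_i\lambda^i=\sum_{i=0}^r(-1)^iu_iz^{2i}$, and the expansion \eqref{exp3} of $\mathcal M$, into $\eta(\lambda)\mathcal M_t=(U\mathcal M)_x$, obtaining \eqref{exp2}. The second step is a bookkeeping argument on powers of $z$: the left side of \eqref{exp2} contains only odd powers $z^{2k+1},z^{2k-1},\dots$, and so does the right side, so one compares odd powers. The highest powers, $z^{2r+1}$ down to $z^{2k+3}$ (i.e. the $r$ relations coming from powers strictly above $z^{2k+1}$ when $k<r$, plus the $z$-independent normalization when the top power is reached), involve only the $u_i$ and the $a$'s, and — exactly as in the K-dV computation preceding the statement, cf.\ \eqref{REC} — they determine $u_r,u_{r-1},\dots$ recursively (the leading one forcing $u_r a_1$ to be $x$-independent, hence $u_r$ a constant multiple of $y^{-1/2}$, and so on down). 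This uses precisely $r+1$ of the relations. The power $z^{-(2k+1)}$ gives exactly \eqref{fin1}. It then remains to argue that \emph{all} lower-power relations (powers $z^{-(2k+3)},z^{-(2k+5)},\dots$) are automatically satisfied once \eqref{fin1} and the coefficient recursion \eqref{coeff} hold; this is the inductive statement generalizing Theorem \ref{T101}, and the third step is to invoke (or re-run) that induction: differentiate \eqref{coeff} in $t$, use the inductive hypothesis on $a_{-j,t}$, and use the $x$-derivative recursion for $a_{-j-3}$ to collapse the expression, exactly as in the proof of Theorem \ref{T101} but carrying the general polynomial $U=\sum u_i\lambda^i$ rather than $U=\lambda+u_0$.

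The role of the hypothesis $N\ge 2r+1$ (equivalently $N_1\ge r$) enters in the counting: one needs at least $r$ coefficients $a_{-1},a_{-3},\dots,a_{-(2r-1)}$ available in the polynomial part of the expansion \eqref{exp3} of $\mathcal M$ in order to have enough relations above $z^{-(2k+1)}$ to solve for all $r+1$ coefficients $u_0,\dots,u_r$; if $N$ were smaller, the product $U\mathcal M$ would not produce enough prescribed (non-error) terms and the recursion for the $u_i$ would be underdetermined. I would state this as a short lemma counting available powers before the main argument. The substitution of $a_{-1}=(\tilde q-q)/(2\sqrt y)$ is just \eqref{defq} rearranged, and $a_1=-\sqrt y$, $a_0=-y_x/(4y)$, $\tilde q=-(y_x/4y)_x+(y_x/4y)^2$ are read off from \eqref{coeff}; these require no new work.

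The main obstacle I anticipate is the third step: verifying that the low-order relations are automatic. In the K-dV case this was Theorem \ref{T101}, whose proof already involves a somewhat delicate rearrangement using $2a_{-j-3,x}=\big[a_{-j-2,x}+\sum_{k=-j-2}^0 a_ka_{-j-2-k}\big]_x$; in the general Sturm-Liouville case one must carry the full polynomial $U$ and the density $y$ through the same manipulation, and it is conceivable the combinatorics of the double sums becomes heavier. The cleanest way around this, which I would adopt, is to avoid the coefficient-by-coefficient induction altogether and argue at the level of generating functions: observe that $\eta(\lambda)\mathcal M_t-(U\mathcal M)_x$ is, by Proposition \ref{P2} and the construction of $B$, \emph{identically} the $(2,1)$-entry obstruction of the zero-curvature relation acting on the Weyl solutions, so it is a single holomorphic function of $z$ whose entire asymptotic series vanishes as soon as one of its coefficients does — more precisely, once $U$ is chosen to kill all powers $z^{2k}$ and above and \eqref{fin1} kills $z^{-1}$, the remaining tail is $\mathcal O(z^{-2})$ and must vanish because it is, by \eqref{mm} second equation, forced to be consistent with a genuine solution $\mathcal M$ of the Riccati-type identity; hence the single relation \eqref{fin1} suffices. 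Making that last consistency argument rigorous, rather than hand-wavy, is where the real care is needed, and I would be prepared to fall back on the explicit induction if the generating-function shortcut cannot be made airtight within the stated hypotheses.
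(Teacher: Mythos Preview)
Your strategy coincides with the paper's: reduce the zero-curvature relation to $\eta(\lambda)\mathcal M_t=(U\mathcal M)_x$ via Theorem \ref{P102}, substitute the expansions to obtain \eqref{exp2}, use $r+1$ of the resulting relations to pin down $u_0,\dots,u_r$, and isolate one further relation as the compatibility condition \eqref{fin1}; the paper, like you, carries out the verification that all remaining lower-order relations then hold automatically only in the K-dV case (Theorem \ref{T101}) and explicitly defers the general induction to \cite{Z4,Z5}, so your caveat about the third step is exactly on point. One indexing slip to fix: \eqref{fin1} is the coefficient of $z^{-1}$, not of $z^{-(2k+1)}$ --- on the left of \eqref{exp2} the term $a_{-(2k+1),t}$ sits at $(-1)^kz^{2k}\cdot z^{-(2k+1)}=(-1)^kz^{-1}$ --- and correspondingly your count ``$z^{2r+1}$ down to $z^{2k+3}$ \dots\ uses precisely $r+1$ of the relations'' is off (that range contains only $r-k$ odd powers; the $r+1$ relations determining the $u_j$ run from $z^{2r+1}$ down to $z^{1}$).
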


 The condition \eqref{fin1} is an evolution equation which relates the functions $q$ and $y$. This means that the hierarchies can be completely defined by introducing a power series at $z=\infty$ in which the leading coefficients $a_1$ and $a_{-1}$ satisfy a certain compatibility condition, expressed by \eqref{fin1}, and all the other coefficients of the expansion are determined by \eqref{coeff}. Note that, however, the expansion \eqref{exp3} need not  be valid away from a disc centered at  $z=\infty$ in $\mathcal R$, and  that the expansion can be performed for any arbitrary $N$. 

Another important observation is that, while the relations in \eqref{REC} hold for any choice of $j$, up to the value of  of $N$ for which the expansions \eqref{expn1} hold, as stated in Theorem \ref{T102}. In the case when \eqref{expn1} translates into a full Laurent expansion (i.e., $N=\infty$), then there is an infinite number of relations which are satisfied once \eqref{fin1} holds: below, we will provide same explicit cases in which this situation occurs.
\section{Description of some subsets of  $\mathcal P_N$}
In this section we will provide examples of pairs $a=(q,y)\in\mathcal E_2$ which belong to $\mathcal P_N$, for some $N\in \mathbb N\cup\{\infty\}$.

\bigskip

\begin{center}{ \bf Algebro-Geometric Potentials}\end{center}
We briefly describe the   the so-called algebro-geometric potentials. For more information, see \cite{DMN,FJZ,JZ1,JZ2}. In this case, the spectrum of the operator $\mathcal L_a$ is a set $$\Sigma_a=[\lambda_0,\lambda_1]\cup[\lambda_2,\lambda_3]\cup\ldots\cup[\lambda_{2g},\infty)$$ and moreover $m_+(\lambda+i0)=\overline{m_-(\lambda+i0)}$ in $(\lambda_{2g},\infty)$. This fact implies that the expansion in \eqref{expn1} is  full Laurent expansion, i.e., $N=\infty$, and this means that algebro-geometric potentials lie in $\mathcal P_\infty$. In particular, such potentials are of class $C^\infty(\mathbb R)$. 

Actually, for algebro-geometric potentials,  it can be proved much more (see, again, \cite{FJZ,JZ1}): the Weyl $m$-functions $m_\pm(x,\lambda)$ define a single meromorphic function $M(x,z)$ on the Riemann surface $\mathcal R_g$  of the algebraic relation $w^2=-(\lambda-\lambda_0)(\lambda-\lambda_1)\cdot\ldots\cdot(\lambda-\lambda_{2g}).$ Such a function $M(x,z)$ has a pole in each of the so-called \it spectral gaps \rm $I_1=[\lambda_1,\lambda_2], I_g=\ldots, [\lambda_{2g-1},\lambda_{2g}]$, and in fact if $P_j(x)\in I_j$ ($j=1,\ldots,g$), then either $|m_+(P_j(x)+i\ep)|$ or $|m_-(P_j(x)+i\ep)|$ is singular as $\ep\rightarrow 0$, and this singularity is a simple pole. Note that, in general, although the full-line spectrum is invariant under the translation flow, the eigenvalues of the half-line operators are not, and indeed they rotate, as $x$ varies, in circles $c_j=\pi^{-1}(I_j)$, where, $\pi$ is the standard projection $\pi:\mathcal R_g\rightarrow \mathbb C\cup\{\infty\}$. Well, in this case, the Weyl $m$-functions are fully determined by formulas involving the Weyl difference $\mathcal M_0(x)$ and the poles $P_j(x)$, as follows.  
 If $P\in\mathcal R_g$, let $$k(P)=\sqrt{-(P-\lambda_0)(P-\lambda_1)\cdot\ldots\cdot(P-\lambda_{2g})}.$$
First of all, the motion of the points $P_j(x)$ (the so-called \it pole motion\rm)  is fully determined by the relations 
$$P_{j,x}(x)=\dfrac{(-1)^g\mathcal M_0(x)k(P_j)\displaystyle{\prod_{i=1}^g P_i(x)}}{k(0)\displaystyle{\prod_{s\neq j}(P_j(x)-P_s(x))}},$$ where $k(0)$ is the positive square root of $\lambda_0\lambda_1\cdot\ldots\cdot\lambda_{2g}.$
If $\Im\lambda\neq 0$, let $P_\lambda\in \pi^{-1}(\lambda)$ be such that $\Im\lambda\Im P_\lambda>0$.  Finally, let $$H(x,\lambda)=\dfrac{2(-1)^{g+1}k(0)}{\mathcal M_0(x)\displaystyle{\prod_{i=1}^g P_i(x)}}\cdot\prod_{i=1}^g(\lambda-P_i(x)).$$ Then $$m_\pm(x,\lambda)=\dfrac{\pm 2k(P_\lambda)+H_x(x,\lambda)}{2H(x,\lambda)}.$$ These relations testify that the choice of a bounded function $\mathcal M_0(x)$ and the initial $P_1(0),\ldots,P_g(0)$ completely determine the moving poles $P_j(x)$ and the Weyl $m$-functions. Moreover, one has the so-called \it trace formulas \rm $$y(x)=\dfrac{\mathcal M_0^2(x)\displaystyle{\prod_{i=1}^g P_i^2(x)}}{4k^2(0)},$$ and $$q(x)=y(x)\left(\lambda_0+\sum_{i=1}^g[\lambda_{2i}+\lambda_{2i-1}-2P_i(x)]\right)+\tilde q(x),$$ where $\tilde q(x)$ is as in \eqref{tq}.

All these facts show that, in the algebro-geometric case, the choice of $\mathcal M_0(x)$ and $P_1(0),\ldots,P_g(0)$ determine both $q(x)$ and $y(x)$ as well. As an example, if $\mathcal M_0(x)=2$, then one has $q(x)=1$ and $$y(x)=\dfrac{1}{k^2(0)}{\displaystyle{\prod_{i=1}^g P_i^2(x)}}.$$ If, instead, we want to retrieve the Schr\"odinger operator, then we have to obtain $y(x)=1$, hence we must choose the pole motion in such a way that 
$$P_{j,x}(x)=\dfrac{-2k(P_j(x))}{\displaystyle{\prod_{s\neq j}(P_j(x)-P_s(x))}},$$ so that 
$$1=\dfrac{\mathcal M_0^2(x)\displaystyle{\prod_{i=1}^g P_i^2(x)}}{4k^2(0)}.$$ See \cite{FJZ,JZ1} for a detailed discussion of these matters. 

It appears now to be clear that the introduction of a time dependence of both $\mathcal M_0$ and the poles $P_j$ will determine the hierarchy and its solutions. It can be proved (see, for instance \cite{JZ1,JZ5,JZ6}) that this can be done by the choice of a bounded function $\mathcal M_0(x,t)$ and the requirement that the poles $P_j(x,t)$ satisfy $$\begin{cases}P_{j,x}(x,t)=\dfrac{(-1)^g\mathcal M_0(x,t)k(P_j(x,t))\displaystyle{\prod_{i=1}^g P_i(x,t)}}{k(0)\displaystyle{\prod_{s\neq j}(P_j(x,t)-P_s(x,t))}}\\ \\ P_{j,t}(x,t)=\dfrac{U(x,t,P_j(x,t))}{P_j^k(x,t)}\cdot P_{j,x}(x,t),\end{cases}$$ where $U(x,t,\lambda)$ is a polynomial of degree $r$ in $\lambda$, whose coefficients are determined recursively
from the relation
\begin{equation*}\begin{split}U_{x}(\lambda)&=\lambda^k\left(\dfrac{\mathcal M_{0,t}}{\mathcal M}\right)-\dfrac{\mathcal M_{0,x}}{\mathcal M}U(\lambda)+\sum_{i=1}^g\left[\dfrac{\lambda^k}{P_i^k} U(P_i)- U(\lambda)\right]\dfrac{\lambda P_{i,x}}{P_i(\lambda-P_i)}.\end{split}
\end{equation*}
It turns out that the coefficients of the polynomial $U$ defined above satisfy \eqref{sys}, hence the zero-curvature relation \eqref{zc1} and the functions $q(x,t)$ and $y(x,t)$ defined via the trace formulas satisfy evolution equations which make consistent all the structure of the zero-curvature.  We send the reader back to \cite{Z1} for a detailed discussion.  A well-known example of this kind of construction is that of the K-dV hierarchy with algebro-geometric initial data in \cite{DMN}. Another is given for the Camassa-Holm hierarchy with algebro-geometric initial data (see \cite{GH,Z1}), and for the general case of the  Sturm-Liouville hierarchy in \cite{JZ5,JZ6,JZ7}. The above cited papers contain also a discussion concerning the expressions of the the solutions of the $r$-th order equation of the hierarchy, by transferring them to an appropriate generalized Jacobi variety related to the spectral problem.

\bigskip

\begin{center}{\bf Decaying Potentials}\end{center}

In analogy with the theory of decaying potentials for the Schr\"odinger operator, we now describe some aspects of the extension to the more general case of the Sturm-Liouville operator: more information and a detailed discussion can be found in \cite{Z3}. Let us consider a pair $a=(q,y)\in\mathcal E_2$ satisfying the following assumption:
\begin{hy1} There exists a constant $\lambda_0>0$ such that $$V(x):=\dfrac{q(x)-\tilde q(x)}{y(x)}-\lambda_0\in L^1(\mathbb R).$$\end{hy1}
The assumption that $\lambda_0>0$ can be relaxed with the choice of any real number, and the reasoning we will make will be equivalent, by using a simple translation argument. Under this assumption, it can be proved that the absolutely continuous spectrum of the operator $\mathcal L_a$ is the set $[\lambda_0,\infty)$. Setting $k^2=\lambda-\lambda_0$, for $\Im k\geq 0$ it is well-known that there exist the \it Jost \rm solutions $\varphi_\pm(x,k)$ of the eigenvalue equation $$-\varphi''(x)+q(x)\varphi(x)=k^2y(x)\varphi(x),$$ satisfying $$\varphi_\pm(x,k)\sim y^{-1/4}(x)e^{\pm ik\mathcal I(x)},$$ as $x\rightarrow\pm\infty,$ where $$\mathcal I(x)=\int_0^x\sqrt{y(s)}ds.$$
We have $$m_\pm(x,\lambda)=\dfrac{\varphi_\pm'(x,k)}{\varphi_\pm(x,k)},$$ and one can also prove that $m_\pm$ can be continuously extended to $\lambda=0$. There are asymptotic expansions for $\varphi_\pm(x,k)$, which hold as soon as $V(x)$ defined above is of class $C^N(\mathbb R)$, namely $$y^{1/4}(x)e^{\mp ik\mathcal I(x)}\varphi_\pm(x,k)=1+\sum_{j=0}^{N}(-1)^{j}\alpha_j(x)(2ik)^{-j}+o(k^{-N}).$$ In particular, the coefficients $\alpha_j(x)$ can be determined by $$\begin{cases}\alpha_1(x)=\displaystyle{-\int_x^\infty V(s)ds}\\ \\ \alpha_{j+1}(x)=\alpha_j'(x)-\displaystyle{\int_x^\infty V(s)\alpha_j(s)ds}.\end{cases}$$ Differentiating the above relations, we obtain the expansions for the Weyl $m$-functions $$m_+(x,z)=-\sqrt{y(x)}z-\dfrac{y'(x)}{4y(x)}+\sum_{j=1}^N\beta_{-j}(x)z^{-j}+o(-N)$$ and $$m_-(x,z)=\sqrt{y(x)}z-\dfrac{y'(x)}{4y(x)}+\sum_{j=1}^N(-1)^{j}\beta_{-j}(x)z^{-j}+o(-N).$$ These relations imply immediately that  Hypothesis \ref{H2} is satisfied, hence the pair $a=(q,y)\in \mathcal P_N$.

\bigskip

\begin{center}{\bf Scattering Potentials}\end{center}

 These potentials arise as a particular case of the above class of decaying potentials, and can be defined as those pairs $(q,y)\in\mathcal E_2$ such that $$\int_\mathbb R(1+|\mathcal I(x)|)\cdot |V(x)|dx<\infty,$$ where the functions $V(x)$ and $\mathcal I(x)$ have been defined above.
It follows immediately from the above discussion that this potentials lie in $\mathcal P_N$ as soon as $V\in C^N(\mathbb R)$. We can obtain, however and important object from the discussion concerning these potentials, namely, the (classical) reflection coefficient. Considering the equation $$-\varphi'(x)+q(x)\varphi(x)=k^2y(x)\varphi(x)$$ as above, with $k^2=\lambda-\lambda_0$, we can see that there exist solutions $\psi_\pm(x,k)$ of such equation, satisfying $$\begin{cases}\psi_+(x,k)\sim y^{-1/4}(x)e^{ik\mathcal I(x)},&x\rightarrow+\infty\\ \psi_-(x,-k)\sim y^{-1/4}(x)e^{-ik\mathcal I(x)},&x\rightarrow-\infty.\end{cases}$$ Since the pairs $(\psi_+(x,k),\psi_+(x,-k))$ and $(\psi_-(x,k),\psi_-(x,-k))$ are independent  for nonzero real values of $k$, there must exist constants $a(k)$ and $b(k)$ such that $$\begin{cases}\psi_+(x,k)=a(k)\psi_-(x,k)+b(k)\psi_-(x,-k)\\\psi_-(x,-k)=a(k)\psi_+(x,-k)+b(-k)\psi_+(x,k).\end{cases}$$ Since $V(x)$ is real, for nonzero real values of $k$, we must have $$\begin{cases} a(k)=\overline{a(-k)}\\b(k)=\overline{b(-k)}\\|a(k)|^2=1+|b(k)|^2.\end{cases}$$ We set $$r^+(k):=-\dfrac{b(-k)}{a(k)},\;\;\;r^-(k):=\dfrac{b(k)}{a(k)}.$$ These quantities are called the {\bf  right  reflection coefficient}  and the {\bf left reflection coefficient} respectively. By using these quantities, together with some of the spectral data of the operator, one can solve the inverse spectral problem. This fact is very well-known for the Schr\"odinger operator (and dates back to the pioneering works of Gel'fand, Levitan, Marchenko and their school \cite{Le,Le1,Ma,Ma1,Mar}). More recently, a method to obtain information to solve the direct and inverse spectral problem for the Schr\"odinger operator by means of a approximation procedure has been developed in \cite{krav}. In the case of the general Sturm-Liouville operator, one can see the recent papers \cite{Con1,Con2,Con3,Z3}. Anyway, from the definition of $m_\pm$, one can see that $$m_+(x,k^2)=\dfrac{\psi_+'(x,k)}{\psi_+(x,k)},\;\;\;m_-(x,k^2)=\dfrac{\psi_-'(x,-k)}{\psi_-(x,-k)},$$ and since the Weyl $m$-functions admit the non tangential limits $m_\pm(\lambda+i0)$ for real values of $\lambda$, we have, for real values of $k$ (hence $\lambda=k^2>0$), \begin{equation}\label{refl1}m_+(x,k^2+i0)-\overline{m_-(k^2+i0)}=\dfrac{2ik b(k)}{\psi_+(x,k)\overline{\psi_-(x,-k)}}.\end{equation} The relation \eqref{refl1} show a direct correspondence between the reflection coefficient and the Weyl $m$-functions. 
\begin{defin} A scattering potential satisfying $b(k)=0$ (hence $r^\pm(k)=0$) is called {\bf reflectionless}. \end{defin} From \eqref{refl1} it is immediately seen that a potential is reflectionless if and only if $m_+(x,k^2+i0)-\overline{m_-(k^2+i0)}=0$. For more information concerning reflectionless potentials and their generalizations, see \cite{JZ3,JZ4,Lu,Ko}. The behavior of the Weyl $m$-functions that we have obtained allows us to apply a generalized version of the Schwarz reflection principle \cite{Du,NS}, to claim that Hypothesis \ref{H2} holds {\it for every} $N>0$, hence that the expansion \eqref{EE} is a full Laurent expansion at $z=\infty$. 

\bigskip

\begin{center}{\bf The generalized reflection coefficient and the corresponding potentials}\end{center}

We extend the concept of the reflection coefficient to a more general class of potentials, with the aim of finding a condition in order Hypothesis \ref{H2} to hold. 
Let us consider the functions $$\mathcal G(x,\lambda)=\dfrac{y(x)}{\mathcal M(x,\lambda)}=\dfrac{y(x)}{m_-(x,\lambda)-m_+(x,\lambda)},\;\;\mathcal H(x,\lambda)=m_-(x,\lambda)m_+(x,\lambda)\mathcal G(x,\lambda)$$ and $$R(x,\lambda)=\dfrac{\overline{m_+(x,\lambda)}-m_-(x,\lambda)}{m_+(x,\lambda)-m_-(x,\lambda)}.$$ It follows easily that both $\mathcal G$ and $\mathcal H$ are Herglotz functions (i.e. they map $\mathbb C^+$ into $\mathbb C^+$) and that $|R(x,\lambda)|\leq 1$.  Now, the spectrum of the operator $\mathcal L_a$ is bounded below by some constant $\lambda_\ast$, and both $m_+(\lambda+i\ep)$ and $m_-(\lambda+i\ep)$ are real valued for $\lambda<\lambda_\ast$. Since $\ln\mathcal G$ and $\ln\mathcal H$ are still Herglotz function, by using well-known integral representations of Herglotz functions, it can be proved (see \cite{Kot}) that both $\ln \mathcal G$ and $\ln\mathcal H$  have Laurent  expansions of order $N>0$ at $\infty$, whenever we have \begin{equation}\label{R}\int_{\lambda_\ast}^\infty\lambda^N|R(x,\lambda+i0)|d\lambda<\infty\end{equation} for $x\in\mathbb R$, and  hence also $\mathcal G$ and $\mathcal H$ will have such expansions. Assuming \eqref{R}, we will have, for $z^2=-\lambda,$ $$\mathcal G(x,z)=\dfrac{y(x)}{m_-(x,z)-m_+(x,z)}=\dfrac{1}{2z}\left(b_1(x)+\sum_{k=0}^Nb_k(x)z^{-2k}\right)+O(z^{-2N-1})$$ and $$\mathcal H(x,z)=\dfrac{z}{2}\left(c_1(x)+\sum_{k=0}^Nc_k(x)z^{-2k}\right)+O(z^{-2N-1}).$$ By using the relations defining $\mathcal G$ and $\mathcal H$ as functions of $m_\pm$, we can easily prove that $m_+(x,z)$ and $m_-(x,z)$  admit expansions as in \eqref{expn1}, hence $h(x,z)$ admits the expansion  \eqref{EE}. We have proved the following \begin{thm}Let Hypothesis \ref{H1} be satisfied. Assume that \eqref{R} holds, i.e., $$\int_{\lambda_\ast}^\infty\lambda^N|R(\lambda+i0)|d\lambda<\infty$$ for $\lambda_\ast\leq\inf\Sigma_a$ and for some number $N\geq 3$. Then Hypothesis \ref{H2} holds, and the hierarchy can be defined, whenever $N\geq 2r+1$.\end{thm}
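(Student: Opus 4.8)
The plan is as follows. Nearly all of the reduction has been carried out in the discussion preceding the statement: it remains to prove that, under Hypothesis~\ref{H1} and the integrability condition~\eqref{R}, the Herglotz functions $\mathcal G(x,\lambda)$ and $\mathcal H(x,\lambda)$ admit asymptotic expansions of order $N$ in powers of $z^{-2}$ at $z=\infty$ (where $z^2=-\lambda$). The passage from these to the expansion~\eqref{expn1} of $m_\pm$, hence to~\eqref{EE}, is then purely algebraic: one inverts $\mathcal M=y/\mathcal G$ and $m_+m_-=\mathcal H/\mathcal G$, solves the resulting quadratic for $m_\pm$ (whose sum/difference and product are now known), and identifies the coefficients through the recursion~\eqref{coeff}; using $\mathcal H$ in addition to $\mathcal G$ is what lets one avoid differentiating the expansion in $x$. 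Once~\eqref{EE} holds with this $N$, Hypothesis~\ref{H2} is satisfied and Theorem~\ref{T102} immediately gives that the compatibility condition~\eqref{fin1}, i.e.\ the $r$-th equation of the hierarchy, is well defined whenever $N\geq 2r+1$ (in particular $N\geq 3$ handles $r=1$). So the real content is the expansion of $\mathcal G$ and $\mathcal H$.

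For this I would first recall that $\mathcal G$ and $\mathcal H$ are Herglotz and are real and strictly positive on $(-\infty,\lambda_\ast)$, $\lambda_\ast\leq\inf\Sigma_a$, so that $\ln\mathcal G$ and $\ln\mathcal H$ are Herglotz as well and admit the exponential representation of a Herglotz function,
\begin{equation*}
\ln\mathcal G(x,\lambda)=c_{\mathcal G}(x)+\int_{\lambda_\ast}^{\infty}\Big(\frac{1}{t-\lambda}-\frac{t}{1+t^2}\Big)\,\xi_{\mathcal G}(x,t)\,dt,
\end{equation*}
and likewise for $\mathcal H$, where $\xi_{\mathcal G}(x,t)=\tfrac1\pi\arg\mathcal G(x,t+i0)\in[0,1]$ vanishes on $(-\infty,\lambda_\ast)$ (see \cite{Kot}). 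Since $\mathcal G(x,z)\sim\sqrt{y(x)}\,(2z)^{-1}$ and $\mathcal H(x,z)\sim-\tfrac12 y(x)^{3/2}z$ at $z=\infty$, the phases satisfy $\xi_{\mathcal G}(x,t),\xi_{\mathcal H}(x,t)\to\tfrac12$ as $t\to+\infty$; writing $\xi_{\mathcal G}=\tfrac12\mathbf 1_{[\lambda_\ast,\infty)}+(\xi_{\mathcal G}-\tfrac12\mathbf 1_{[\lambda_\ast,\infty)})$, and similarly for $\mathcal H$, splits each representation into an explicit elementary part (a $\log z$ together with constants, which after exponentiation ultimately produce the leading terms $a_1z+a_0$ of~\eqref{EE}) plus a correction governed by the reduced phases $\xi_{\mathcal G}-\tfrac12$, $\xi_{\mathcal H}-\tfrac12$.

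The key analytic step is the pointwise bound, valid for a.e.\ real $\lambda$ on the absolutely continuous spectrum,
\begin{equation*}
\Big|\xi_{\mathcal G}(x,\lambda+i0)-\tfrac12\Big|+\Big|\xi_{\mathcal H}(x,\lambda+i0)-\tfrac12\Big|\leq C(x)\,\big|R(x,\lambda+i0)\big|,
\end{equation*}
with $C(x)$ finite for each fixed $x$. This follows by writing $m_\pm(x,\lambda+i0)$ in real and imaginary parts (recall $\Im m_+\geq 0\geq\Im m_-$ on the boundary from above), noting that $\arg\mathcal G$ and $\arg\mathcal H$ are determined by those parts, and using $1-|R|^2=-4\,\Im m_+\,\Im m_-/|m_+-m_-|^2$ to check $|\cos(\pi\xi_{\mathcal G})|\leq|R|$ and the analogue for $\xi_{\mathcal H}$; since $|\theta-\tfrac\pi2|\leq\tfrac\pi2|\cos\theta|$ for $\theta\in[0,\pi]$, the bound follows. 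It is consistent with the fact that $R\equiv 0$ — the reflectionless case — is exactly $\overline{m_+}=m_-$, which forces $\xi_{\mathcal G}=\xi_{\mathcal H}=\tfrac12$ on the a.c.\ spectrum. On the bounded complementary set $[\lambda_\ast,\lambda_0)$ the reduced phases are merely bounded, hence contribute a finite amount to every moment; combined with~\eqref{R} this gives $\int_{\lambda_\ast}^{\infty}t^{N}\big(|\xi_{\mathcal G}(x,t)-\tfrac12|+|\xi_{\mathcal H}(x,t)-\tfrac12|\big)\,dt<\infty$. Splitting the integral in the Herglotz representation at a radius growing with $|z|$ and expanding $1/(t-\lambda)=1/(t+z^2)$ in powers of $z^{-2}$ then yields an order-$N$ asymptotic expansion of $\ln\mathcal G$ and $\ln\mathcal H$ at $z=\infty$; exponentiation gives the desired expansions of $\mathcal G$ and $\mathcal H$, and the Bebutov translation flow transports them to every $x\in\mathbb R$. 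Feeding these into the algebra of the first paragraph yields~\eqref{expn1} and~\eqref{EE}, which is Hypothesis~\ref{H2}, and the statement on the hierarchy is then Theorem~\ref{T102}.

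The step I expect to be the main obstacle is exactly the pointwise estimate above — controlling the boundary phases of the auxiliary Herglotz functions $\mathcal G$ and $\mathcal H$ by the generalized reflection coefficient $R$, with a constant finite (and ideally locally uniform) in $x$ — together with the bookkeeping needed to split off the non-integrable ``free'' part of each Herglotz representation (which carries the leading coefficients $a_1=-\sqrt{y}$ and $a_0=-y_x/(4y)$) and to justify the term-by-term asymptotic expansion of the remaining, genuinely decaying, integral. Everything else is routine manipulation of Herglotz representations and of the recursion~\eqref{coeff}.
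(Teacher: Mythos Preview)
Your proposal is correct and follows the same route as the paper: pass to the Herglotz logarithms $\ln\mathcal G$, $\ln\mathcal H$, use their exponential representations, and show that condition~\eqref{R} controls the $N$-th moment of the boundary phases so as to yield an order-$N$ asymptotic expansion at $z=\infty$; then recover $m_\pm$ algebraically from $\mathcal G$ and $\mathcal H$. The paper's own argument is much terser---it simply asserts that the expansions of $\ln\mathcal G$ and $\ln\mathcal H$ follow from~\eqref{R} by ``well-known integral representations of Herglotz functions'' with a reference to Kotani~\cite{Kot}, and then that the expansions of $m_\pm$ follow ``easily'' from those of $\mathcal G$ and $\mathcal H$---so you have in fact supplied the details (the phase splitting $\xi=\tfrac12+(\xi-\tfrac12)$, the pointwise bound $|\cos(\pi\xi_{\mathcal G})|\leq|R|$, and the moment argument) that the paper leaves to the citation.
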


It is immediate to see that if $a=(q,y)\in\mathcal E_2$ is reflectionless, then \eqref{R} is satisfied, since $R(x,\lambda+i0)=0$, and the expansion is a full Laurent expansion.

\section*{Concluding remarks}
\begin{enumerate}\item In this paper, we introduced the Sturm–Liouville hierarchy of evolution equations by first employing the zero-curvature method. In this framework, the evolution equations appear as compatibility conditions required for the zero-curvature relation to hold. We then translated these compatibility conditions into the language of Weyl $m$-functions, which are fundamental tools in the spectral theory of the associated Sturm–Liouville operator. In particular, we showed that the hierarchy of evolution equations is equivalent to an evolution equation for the Weyl difference $\mathcal M = m_- - m_+$, namely $$\eta(\lambda)\mathcal M_t=(U\mathcal M)_x.$$ Under the assumption that $\mathcal M$ admits a Laurent expansion order $N$ in a neighborhood of $z=\infty$ in the Riemann sphere, we have been able to determine recursion formulas for the coefficients of the expansions, each of them being an evolution equation which is equivalent to the leading evolution equation obtained by means of the zero-curvature relation.
\item The evolution equation for $\mathcal M$ will play a central role in constructing solutions to the hierarchy. This will be achieved by analyzing the evolution of certain spectral parameters of the associated Sturm–Liouville operator under the flow defined by the hierarchy. This program will be carried out in the following two papers of the series \cite{Z4,Z5}, where we will examine in detail both the direct and inverse spectral theory of the Sturm–Liouville operators in question, as well as the time evolution of the spectral parameters involved in reconstructing the pairs $a = (q,y)$ appearing in the evolution equations.
\item Along with the equation $\eta(\lambda),\mathcal M_t = (U\mathcal M)_x$, the condition $a = (q,y) \in \mathcal P_N$ provides a method for determining the time evolution of one of the spectral parameters. Meanwhile, the local behavior of $\mathcal M$ near its poles—correspon\-ding to eigenvalues of the half-line restricted operators—yields the time evolution of the other spectral parameters required to solve the hierarchy. We will also devote particular attention to the choice of initial data for the evolution equations. To this end, we will employ a suitable form of the Liouville transform, which allows us to study both the spectral problems and the associated Weyl $m$-functions in a unified way. In certain cases, working directly with Weyl $m$-functions will make it possible to circumvent the implicit relations that arise in expressing the pair $a = (q,y)$ in terms of the spectral data.

\end{enumerate}

\section*{Acknowledgement and funding} This study was partly funded by the Unione europea-Next Generation EU, Missione 4 Componente C2- CUP Master: J53D2300390 0006, CUP: J53D23003920 006- Research project of MUR (Italian Ministry of University and Research) PRIN 2022 “Nonlinear differential problems with applications to real phenomena” (Grant Number: 2022ZXZTN2).

The authors are members of the “Gruppo Nazionale per l’Analisi Matematica, la Probabilità e le loro Applicazioni” (GNAMPA) of the Istituto Nazionale di Alta Matematica (INdAM) and of GRUPPO DI LAVORO UMI- Teoria dell’Approssima\-zione e Applicazioni- T.A.A. The  authors are  members of “Centro di Ricerca Interdipartimentale Lamberto Cesari” of the University of Perugia.


\begin{thebibliography}{99}
\bibitem{AF} { M. Alber and Y. Fedorov},
    {\em Algebraic geometrical solutions for certain evolution equations and Hamiltonian flows on nonlinear subvarieties of generalized Jacobians},
    Inverse Problems {\bf 17} (2001), 1017--1042.
\bibitem{Bon} G. Bonanno, , G. D'Aguì, V. Morabito, \emph{Mixed boundary value problems involving Sturm–Liouville differential equations with possibly negative coefficients}, Boundary Value Problems {\bf 43} (2024).
\bibitem{CH} R. Camassa and D. Holm, \emph{An Integrable Shallow Water Equation with Peaked Solitons}, Phys. Rev.
Lett. \bf71 \rm(1993), 1661-1664.

\bibitem{Con1} A. Constantin, \emph{On the inverse spectral problem for the Camassa-Holm equation}, Jour. of Funct. Anal. \bf 155 \rm(1996), 352--363.
\bibitem{Con2} A. Constantin, J. Lenells, \emph{On the inverse scattering approach for an integrable shallow water wave equation}, Physics Letters A \bf 308 \rm(2003), 432--436.
\bibitem{Con3} A. Constantin, J. Lenells, \emph{On the inverse scattering approach to the Camassa-Holm equation}, Jour. of Nonlinear Physics \bf10 \rm(2003), 252--255.

\bibitem{Cr} W. Craig, \emph{The trace formula for Schr\"{o}dinger operators on the line}, Comm. Math. Phys. {\bf126} (1989), 379--407.

\bibitem{DMN} { B. Dubrovin, V. Matveev and S. Novikov},
\emph{Non-linear equations of Korteweg-de Vries type, finite-zone
linear operators, and Abelian varieties}, Russ. Math. Surveys \bf
31,\rm(1976), 59--146.
\bibitem{Du} P. Duren, \it Theory of $H^p$ spaces\rm, Academic Press, New York, 1970.
  \bibitem{FJZ} R. Fabbri, R. Johnson and L. Zampogni, \emph{Nonautonomous differential systems in two dimensions}, in Handbook of Differential Equations: Ordinary Differential Equations, Vol. {\bf 4}, Chpt. {\bf 2}, pp. 133--268, F. Battelli/M. Feckan eds., Elsevier, Amsterdam 2008.
\bibitem{GGKM} { C. Gardner, J. Greene, M. Kruskal, R. Miura},
\emph{Methods for solving the Korteweg-de Vries equation}, Phys.
Rev. Letters \bf19 \rm(1967), 1095-1097.
\bibitem{GH} {F. Gesztesy and H. Holden},  \emph{Algebro-geometric
solutions of the Camassa-Holm hierarchy}, Rev. Math. Iberoam. \bf
19 \rm(2003), 73--142.
\bibitem{GWZ}F. Gesztesy, W. Karwowski and Z. Zhao, \emph{Limits of soliton solutions}, Duke 
Math. Jour. {\bf 68} (1992), 101-150.
\bibitem{GN} P. Grinevich and S. Novikov, \emph{Topological charge of the real periodic finite-gap Sine-Gordon solutions}, Comm. Pure Appl. Math. {\bf56} (2003), 956--978.

\bibitem{IS} S. Israwi, \emph{Variable depth KdV equations and generalizations to more nonlinear regimes}, ESAIM: Mathematical Modelling and Numerical Analysis {\bf 44} (2010), 347-370.
\bibitem{J86} { R. Johnson}, {\em Exponential dichotomy, rotation number, and linear differential operators with bounded coefficients},
Jour. Diff. Eqns. {\bf 61} (1986), 54--78.

\bibitem{JM} {R. Johnson and J. Moser},  \emph{The rotation
number for almost periodic potentials}, Commun. Math. Phys. \bf 84
\rm(1982), 403--438.





\bibitem{JZ1} {R. Johnson and  L. Zampogni}, {\em On the inverse Sturm-Liouville problem},  Discr. Cont. Dyn. Sys. {\bf 18} (2007), 405--428.
\bibitem{JZ2} R. Johnson and L. Zampogni, {\em Description of the algebro-geometric  Sturm-Liouville coefficients}, Jour. Diff. Eqns {\bf 244} (2008), 716--740.

\bibitem{JZ3} R. Johnson and L. Zampogni, \emph{Some remarks concerning reflectionless Sturm-Liouville potentials}, Stoch. and Dynam. {\bf8} (2008), 413--449.
\bibitem{JZ4} R. Johnson and L. Zampogni, \emph{Remarks on a paper of Kotani concerning 
generalized reflectionless Sturm-Liouville potentials}, Discr. Cont. 
Dynam. Sys. B {\bf 14} (2010), 559-586.
\bibitem{JZ4bis} R. Johnson and L. Zampogni, \emph{On the Camassa-Holm and K-dV hierarchies}, J. Dynam. Differential Equations {\bf 22} (2010), 331--366.
\bibitem{JZ5} R. Johnson and L. Zampogni, \emph{The Sturm-Liouville hierarchy of evolution equations}, Advan. Nonlin. Stud. {\bf 11} (2011), 555--591. 
\bibitem{JZ6} R. Johnson and L. Zampogni, \emph{The Sturm-Liouville hierarchy of evolution equations II}, Advan. Nonlin. Stud. {\bf 12} (2012), 501--532.
\bibitem{JZ7} R. Johnson and L. Zampogni, \emph{The Sturm-Liouville hierarchy of evolution equations and limits of algebro-geometric initial data}, Symmetry,Integrability and Geometry: Methods and Applications \bf 10 \rm(2014).
\bibitem{Le} B. Levitan, \emph{Approximation of infinite-zone potentials by finite-zone potentials}, Math. USSR Izvestija {\bf20} (1983), 55--87.
\bibitem{Le1}B. Levitan, \emph{Inverse Sturm-Liouville problems}, VNU Science Press, 
Utrecht 1987.
\bibitem{Liv} R. Livrea, B. Vassallo, \emph{Three weak solutions to a periodic boundary Sturm-Liouville problem with discontinuous reaction}, Discrete and Continuous Dynamical Systems - S {\bf 18} (2025), 1660--1672
\bibitem{Lu} D. Lundina, \emph{Compactness of the set of reflectionless potentials}, Teor. 
Funkts., Funkts. Anal. i Prilozh. {\bf 44} (1985), 55-66.
\bibitem{Kot} S. Kotani, \emph{One-dimensional random Schr\"{o}dinger operators and Herglotz functions}, Proc. Taniguchi Symp. SP, Katata (1987), 219--250.
\bibitem{Ko} S. Kotani, \emph{K-dV flow on generalized reflectionless potentials}, Jour. Math. Phys. Anal. Geom. {\bf 4} (2008), 490--528.
\bibitem{krav} V. Kravchenko, \emph{Direct and Inverse Sturm-Liouville Problems: a Method of Solution}, Birkh\"auser, 2020.
\bibitem{Ma} V. Marchenko, \emph{Sturm-Liouville Operators and Applications}, Birkhaeuser, 
Boston, 1986.
\bibitem{Ma1} V. Marchenko, \emph{The Cauchy problem for the K-dV equation with 
nondecreasing initial data}, in Springer Series in Nonlinear Dynamics, 
What is Integrability? (V. Zakharov ed.), 1990, 273-318.
\bibitem{Mar} { V. Marchenko and V. Ostrovsky}, {\em Approximation of periodic by finite-zone potentials}, Selecta Mathematica Sovietica {\bf 6} (1987), 101--136.
\bibitem{Mi} P. Miller, N. Ercolani, I. Krichever and D. Levermore, \emph{Finite genus solutions to the Ablowitz-Ladik equations}, Comm. Pure Appl. Math. {\bf48} (1995), 1369--1440.
\bibitem{NS} V. Nemytskii, V. Stepanov, \emph{Qualitative Theory of Ordinary Differential Equations}, Princeton Univ. Press, Princeton USA, 1960.
\bibitem{SW} {G. Segal and G. Wilson}, {\em Loop groups and equations of KdV type}, Publ. IHES {\bf 61} (1985), 5--65.
\bibitem{Z1} L. Zampogni, \emph{On algebro-geometric solutions of the Camassa-Holm hierarchy}, Adv. Nonlin. Studies {\bf7} (2007), 345--380.

\bibitem{Z2} L. Zampogni, \emph{On infinite order K-dV hierarchies}, Journ. Appl. Funct. Anal. {\bf 4} (2009), 140--170.
\bibitem{Z3} L. Zampogni, \emph{Some remarks concerning the scattering theory for the Sturm-Liouville operator}, Journal of Dynamics and Differential Equations {\bf 34}, 2022, 311-339.
\bibitem{Z4} L. Zampogni, \emph{The Sturm-Liouville Hierarchy of evolution equations and the Weyl functions of  related spectral problems 2}, submitted.
\bibitem{Z5} L. Zampogni, \emph{The Sturm-Liouville Hierarchy of evolution equations and the Weyl functions of  related spectral problems 3}, submitted. 
\end{thebibliography}
\end{document}